\documentclass[onefignum,onetabnum]{siamart220329}

\usepackage{amsfonts}
\usepackage{graphicx}

\def\pra{Phys. Rev. A}
\def\prb{Phys. Rev. B}

\def\prl{Phys. Rev. Lett.}

\newcommand{\eps}{\varepsilon}

\headers{Robustness of Propagating  Bound States in the Continuum}{Lijun Yuan and Ya  Yan Lu} 

\title{On the Robustness of Propagating Bound States in the Continuum 
\thanks{Submitted to the editors DATE.
\funding{This work was supported by the National Natural Science Foundation of China, under project 12571384, and the Research Grants Council of Hong Kong Special Administrative Region, China, under project CityU 11317622. }}
} 

\author{Lijun Yuan\thanks{College of Mathematics and Statistics, Chongqing Technology and Business University, Chongqing, China   (\email{ljyuan@ctbu.edu.cn}).}
 \and Ya Yan Lu\thanks{Department of Mathematics, City University of  Hong Kong, Hong Kong   (\email{mayylu@cityu.edu.hk}).}}

\begin{document}

\maketitle

\begin{abstract}
  Bound states in the continuum (BICs) are localized eigenmodes with their
  frequencies in the radiation continuum of scattering states.  The
  existence of a BIC implies the loss of uniqueness for scattering
  problems with given incident waves. Perturbed wave systems close to
  the ideal ones with a BIC exhibit strong resonance effects that are
  essential to numerous practical applications. A question of
  fundamental importance is whether a BIC is robust, i.e., whether it
  can continue its existence
  when the structure is slightly perturbed. In an
  earlier work [Yuan and Lu, Optics Letters, Vol.~42, pp.~4490-4493,
  2017], for a class
  of BICs governed by the two-dimensional (2D) Helmholtz equation, which
  are not trivially protected by symmetry, we uncovered the conditions
  that ensure robustness and formally constructed the BIC 
  in perturbed systems using a perturbation method. In this paper, we
  present a rigorous theory on the robustness of BICs in 2D dielectric
  structures with a single periodic direction. Specifically, we 
  analyze the solvability and provide estimates for each order in the
  perturbation series, and prove the convergence of the series. 
\end{abstract}

\begin{keywords}
  Bound State in the Continuum, diffraction problem, Helmholtz equation, resonance
\end{keywords}

\begin{AMS}
\end{AMS}

\section{Introduction}

In classical and quantum linear wave systems involving at least one unbounded
spatial variable, a bound state in the continuum (BIC) is a nonzero
solution that decays to zero at 
infinity, but for the same frequency and wavevector (if appropriate),
the governing equations support waves that propagate to or from
infinity. The concept of BIC was originally introduced by von Neumann
and Wigner in 1929~\cite{neumann29}, for a 1D Schr\"odinger equation with an
oscillatory potential that decays to zero at infinity. Since then,
numerous BICs have been found in various wave
systems~\cite{Review16,Sadr21,kosh23}, and the
governing equations can be infinite linear systems, ordinary differential
equations~\cite{fonda63,still75,fried85,gomis17,xiuchen26}, Helmholtz
equations~\cite{bonnet94,evans94,shipman03,porter05,shipman07,mari08,bulg08}, Maxwell's
equations~\cite{hsu13_2,jin19}, and others~\cite{mciver96,linton97}. 
Mathematically, a BIC corresponds to a discrete eigenvalue
in the continuous spectrum. At the frequency and wavevector (if
appropriate) of the BIC, the scattering problems with given incoming
waves from infinity lose uniqueness~\cite{bonnet94,mciver96,linton97}. 

In recent years, BICs have found many applications in photonics~\cite{Review16,Sadr21,kosh23,azzam21,kang23}. Most
of these applications are related to resonances near BICs when the
structure or the wavevector is perturbed. If the structure is
translationally invariant or periodic in $x$ and $y$, where
$\{x,y,z\}$ is a Cartesian coordinate system,  a BIC is only
bounded in $z$ and has a specific
frequency $\omega_*$ and a specific (Bloch) wavevector $(\alpha_*, \beta_*)$. For
a real wavevector $(\alpha,\beta)$ near $(\alpha_*, \beta_*)$, the governing
equation usually has only a resonant mode which satisfies an outgoing
radiation condition as $z \to \infty$ and has a complex frequency
$\tilde{\omega}$. Scattering problems for incoming waves with the same
wavevector $(\alpha, \beta)$ and a real frequency $\omega$ near
$\mbox{Re}(\tilde{\omega})$ exhibit resonant wave phenomena, such as
local field enhancement and anomalous transmission and reflection, and
they are closely related to the quality-factor ($Q$ factor) $Q = -0.5
\mbox{Re}(\tilde{\omega})/\mbox{Im}(\tilde{\omega})$. Typically, the
$Q$ factor is proportional to $1/s^2$, where $s$ measures the
wavevector difference, but near some special BICs (the so-called super
BICs), the $Q$ factor may be proportional to $1/s^4$, $1/s^6$, etc~\cite{yuan20,nan25prl}. The super-BICs induce high-$Q$ resonances for a wider
range of wavevectors, and are particularly useful for practical
applications~\cite{kodi17,hwang21}. 

When the structure is perturbed, a BIC may or may not be
destroyed. For applications in photonics, BICs are usually studied in
lossless non-magnetic structures described by a real 
relative permittivity (also called dielectric function) $\epsilon$
that depends on the spatial variables. If the original structure with dielectric function
$\epsilon_*$ supports a BIC with frequency $\omega_*$ and wavevector
$(\alpha_*, \beta_*)$, we consider perturbed structures with the
dielectric function $\epsilon = \epsilon_* + \delta F$, where $F$ (the
perturbation profile) is a real function that preserves the
translational invariance or  periodicity in $x$ and $y$, and
$\delta$ is the amplitude of the perturbation. A natural question is
whether the perturbed structure supports a BIC (near the original one in
the unperturbed structure) with the same wavevector
$(\alpha_*, \beta_*)$. The answer depends on the nature of the BIC and
the symmetry of the perturbation. The so-called symmetry-protected
BICs have a symmetry mismatch with waves that propagate to/from
infinity, and will continue their existence in the perturbed structure
if the perturbation is sufficiently small and perserves the relevant
symmetry. 
The wavevector of the 
symmetry-protected BICs is fixed at $(\alpha_*, \beta_*) = {\bf
  0}$. If the perturbation breaks the relevant symmetry, the BIC is
usually destroyed and it becomes a resonant mode with a finite $Q$
factor. The $Q$ factor is usually proportional to $1/\delta^2$, but
special symmetry-breaking perturbations can give a $Q$ factor
proportional to $1/\delta^4$, $1/\delta^6$, etc~\cite{yuan20}.
If the symmetry-breaking perturbation contains parameters, the BIC may be
preserved by properly tuning the parameters.  For generic
perturbations, it is possible to identify the minimum number of
tunable parameters needed to preserve the BIC~\cite{lijun20pd1,amgad23pra}. 

If the original BIC has a nonzero wavevector $(\alpha_*, \beta_*)$,
there is usually no BIC (near the original one) in the perturbed
structure with the same
wavevector. Like the symmetry-protected BICs, we can
study the $Q$ factor of resonant modes in the perturbed structure, 
and tune parameters in the perturbation to preserve the BIC with the
same wavevector $(\alpha_*, \beta_*)$. Moreover, it is important to
find out whether there is a BIC in the perturbed structure with a slightly
different wavevector. BICs with a nonzero
wavevector are first studied by Porter and  Evans~\cite{porter05} and Hsu {\it et al.}~\cite{hsu13_2} for
a periodic array of rectangular cylinders and a photonic crystal slab
(a dielectric slab with a biperiodic array of circular air holes),
respectively. 
Their numerical results indicate that the BICs with a nonzero wavevector
exist continusouly with respect to strucural parameters, such as the
width or height of the rectangular cylinders, radius of the circular
air hole, thickness and the refractive index of the slab.  Zhen {\it et
  al.}~\cite{zhen14} suggested that these BICs are robust with respect to any small
perturbation that preserves the reflection symmetry in $z$ and the
inversion symmetry in the $xy$ plane.
In \cite{yuan17ol,yuan21rob}, we developed a formal 
theory for the robustness of BICs in structures with one and two
periodic directions, respectively. It was shown that only the generic
BICs satisfying a precise condition are robust. It turns out that the
non-generic BICs are precisely the super-BICs around which the
resonant modes have an ultra-high $Q$ factor~\cite{nan25prl}. In addition, it has
been shown that a non-generic BIC is a bifurcation point, when the
perturbation amplitude $\delta$ is regarded as a parameter~\cite{nan24ol}. In that
case, the perturbed structure can support different number of BICs for
$\delta > 0$ and $\delta < 0$, respectively~\cite{nan24opex}. 

In this paper, we follow our previous work~\cite{yuan17ol} and present a rigorous
theory on the robustness of generic BICs in two-dimensional (2D) structures
with a single periodic direction.  The theory developed in \cite{yuan17ol} is
rather formal. 
The BICs in the perturbed structures are constructed by
expanding the wave field, frequency and Bloch wavenumber in 
power series of $\delta$, but the
convergence of these power series was not established. In this work, we
formulate the problem in proper function spaces, establish the
solvability condition and estimate the norm for each term in the
power series, and prove the convergence of the power series
rigorously. The approach developed in
this paper can be extended to analyze the robustness of BICs in more
complicated structures, the parametric dependence of nonrobust BICs,
and the bifurcation of non-generic BICs.

\section{Bound states in the continuum and diffraction solutions}

\subsection{Basic equations}

We consider a 2D dielectric structure  that is translationally invariant
in $x$, periodic in $y$ with period $L$, and bounded in $z$ by
vacuum, where $\{x, y, z\}$ is a Cartesian coordinate  system. 
The dielectric function $\epsilon$  satisfies 
\begin{equation}
\label{eq:period_eps} \epsilon(y+L,z) = \epsilon({\bf r}), \quad
\forall  \ {\bf r} = (y,z) \in \mathbb{R}^2,
\end{equation}
and  $\epsilon({\bf r}) = 1$ for $|z| > d$. Moreover, $\epsilon \in
L^{\infty}(\mathbb{R}^2)$,  is real and positive. An example is
given in Fig.~\ref{fig:grating} 
\begin{figure}[h]
\centering
\includegraphics[scale=0.6]{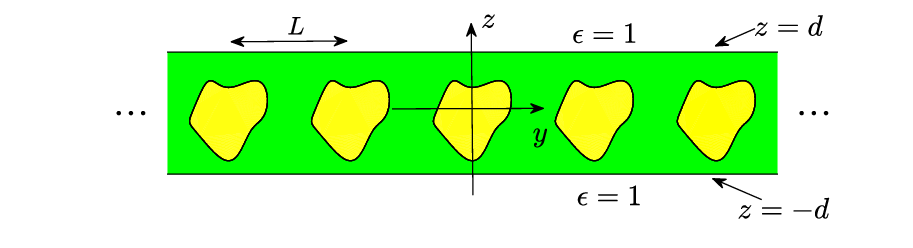}
\caption{An example of a 2D structure with 1D periodicity:a slab with a periodic array of air holes.}
\label{fig:grating}
\end{figure}
below. 

For 2D structures, assuming the wave fields are independent of $x$,
Maxwell's equations can  be reduced to two 
uncoupled scalar equations for transverse electric (TE) and transverse
magnetic (TM) waves, respectively. 
In this paper, we focus 
on the TE case, although all results can be easily extended to
the TM case. For the TE case,  the $x$-component of the electric field,
denoted as $u$, satisfies the following Helmholtz equation  
\begin{equation}
  \label{eq:helm}
\Delta u  + k^2 \epsilon({\bf r})\, u = 0, 
\end{equation}
where $\Delta = \partial_y^2 + \partial_z^2$, $k=\omega/c$ is the free-space wave number, $\omega$ is the angular frequency, and $c$ is the speed of light in vacuum.  Since the structure is periodic in $y$, we look for solutions of the form 
\begin{equation}
\label{eq:bloch_modes}  u({\bf r}) = \phi({\bf r}) e^{ \mathsf{i} \beta y},
\end{equation}
where $\phi$ is periodic in $y$ with the same period $L$,  and $\beta \in
(-\pi/L,  \pi/L]$ is the Bloch wave number. The
governing equation for $\phi$ is 
\begin{equation}
  \label{eq:helm_phi}
\Delta \phi  + 2 \mathsf{i} \beta \partial_y \phi  + [ k^2 \epsilon({\bf r}) -
\beta^2 ] \phi = 0.
\end{equation}

\subsection{Bound states in the continuum}

If for real values of $k$ and $\beta$, a nonzero solution of
\eqref{eq:helm} given in the Bloch form \eqref{eq:bloch_modes}
satisfies 
\begin{equation}
\label{eq:decay_condition} u({\bf r}) \to 0, \,  \mbox{as} \, z \to
\pm \infty, \, \forall y \in \mathbb{R}, 
\end{equation}
then the solution is a guided mode.
Guided modes with $0 < k < |\beta|$  are well known. They depend continuously on 
$\beta$ and $k$, and their existence in periodic dielectric structures
can be established~\cite{bonnet94}.

A BIC is a special guided mode with $k > |\beta|$. It is well-known
that periodic
structures with a reflection symmetry in the periodic direction, i.e.
\begin{equation}
\label{eq:yeven_eps} \epsilon({\bf r}) = \epsilon(-y,z), \quad \forall
{\bf r} \in \mathbb{R}^2,
\end{equation}
may support  symmetry-protected BICs which are antisymmetric in $y$
and have a zero $\beta$~\cite{bonnet94,evans94,shipman07}.  The existence of BICs that are not protected by symmetry is  more difficult to establish \cite{ches19, yu25, zhou25}.
BICs with a nonzero $\beta$ (the so-called propagating BICs) are first
reported in \cite{porter05,mari08,hsu13_2}.  It seems that if $L$ is the
true minimum period of $\epsilon({\bf r})$, the BICs appear as
isolated points in the $\beta$-$k$ plane. 
It is known that propagating BICs are much easier to find if the
periodic structure has reflection symmetries in both $y$ and $z$,
i.e.,  $\epsilon({\bf r})$ satisfies
condition \eqref{eq:yeven_eps} and  
\begin{equation}
\label{eq:zeven_eps} \epsilon({\bf r}) = \epsilon(y,-z), \quad \forall
{\bf r} \in \mathbb{R}^2. 
\end{equation}

Given the Bloch form  \eqref{eq:bloch_modes}, we can formulate the
eigenvalue problem for BICs as follows: 
Finding a nonzero function $u$ and a real pair $(\beta, k)$ satisfying
$\beta \in (-\pi/L, \pi/L]$, $ k > |\beta|$, and
\begin{eqnarray}
\label{eq:BICs}    \left\{ \begin{array}{ll} \Delta u  + k^2
                             \epsilon({\bf r})\,  u = 0,  &  {\bf r} \in \Omega, \\
                          u(L/2,z) = e^{ \mathsf{i} \beta L} u(-L/2,z), & z \in \mathbb{R},\\ 
                          \partial_y u (L/2,z) =  e^{ \mathsf{i} \beta L}
                             \partial_y u (-L/2,z),  & z \in \mathbb{R},\\
                          u({\bf r})  \to 0 \quad \mbox{as} \quad   z
                             \to  \pm \infty, &| y | < L/2,
                          \end{array} \right.
\end{eqnarray}
where $ \Omega =  \left\{ (y,z) \in \mathbb{R}^2 :  |y| < L/2, |z| <
  \infty \right\}$ is the domain for one period of the structure. The
boundary conditions of $u$ in the $y$ 
direction are the quasi-periodic conditions. In terms of $\phi$, the
eigenvalue problem for BICs is: Finding a nonzero function $\phi$ and
a real pair $(\beta, k)$ satisfying $\beta \in (-\pi/L, \pi/L]$,  $k
> |\beta|$, and
\begin{eqnarray}
  \label{eq:BICs_phi}
  \left\{ \begin{array}{ll}
                                   \Delta \phi + 2 \mathsf{ i } \beta \partial_y
            \phi  + [  k^2 \epsilon({\bf r}) - \beta^2 ]  \phi = 0,  &  {\bf r} \in \Omega, \\
    \phi(L/2,z) = \phi(-L/2,z), &  z  \in \mathbb{R},\\ 
\partial_y \phi (L/2,z) =  \partial_y  \phi (-L/2,z),  & z \in \mathbb{R},\\
                          \phi \to 0 \quad \mbox{as} \quad  z \to  \pm \infty, & | y | < L/2.
                          \end{array} \right.
\end{eqnarray}

\subsection{Simple properties of BICs}
\label{sec:propertiesBIC}
Let $u_*({\bf r})$ be a BIC with free-space wave number $k_*$ and
Bloch wave number $\beta_*$  in a periodic structure with dielectric
function $\epsilon_*({\bf r})$ satisfying the symmetry conditions 
\eqref{eq:yeven_eps} and \eqref{eq:zeven_eps}. If the 
BIC $u_*$ is non-degenerate, i.e., the eigenspace of problem
\eqref{eq:BICs} is one-dimensional, then we can scale $u_*$ 
such that
\begin{equation}
\label{eq:PTsym_u} u_*({\bf r}) =  \overline{u}_*(-y,z) , \quad 
\forall {\bf r} \in \mathbb{R}^2, 
\end{equation}
i.e.,  it is $\mathcal{PT}$-symmetric \cite{porter05}. Here, $\overline{u}_*$ denotes
the complex conjugate of $u_*$.   In addition, since the structure has reflection symmetry in $z$, the
BIC $u_*$ must be either even or odd in $z$, i.e., 
\begin{equation*}
\label{eq:zeven_u} u_*({\bf r}) = u_*(y,-z), \quad \mbox{or} \quad u_*({\bf r}) = -u_*(y,-z),
\end{equation*}
for $ {\bf r} \in \mathbb{R}^2$.

In the homogeneous medium (assumed to be air)  for $z > d$ and $z<-d$, the BIC $u_*$ can be
expanded in Fourier series
\[
  u_*({\bf r}) = \sum\limits_{m = -\infty}^{\infty} c_{m}^\pm
  e^{ \mathsf{ i } ( \beta_m y \pm \alpha_m z)}, \quad \pm z > d, 
\]  
where $c^{\pm}_m$ are coefficients, $\beta_m = \beta_* + 2 m \pi /L$,  and 
\begin{equation}
\label{eq:alpha}
\alpha_m =  \left\{ \begin{array}{ll}  \sqrt{k^2_* - \beta_m^2}, & k^2_* \geq \beta_m^2, \\
                                                           \mathsf{i} \sqrt{ \beta_m^2 - k^2_*}, & k^2_* < \beta_m^2.
                                                           \end{array}  \right. 
\end{equation}
To focus on the simplest BICs, we further assume that 
\begin{equation}
\label{eq:cond_k_beta} |\beta_*| < k_* < 2 \pi/L - |\beta_*|. 
\end{equation}
In that case, only $\alpha_0=\sqrt{k_*^2 -\beta_*^2}$ is real, all
other $\alpha_m$ for $m \neq 0$  are purely imaginary.  For $m\ne 0$,
we let 
\begin{equation}
\gamma_m = - \mathsf{i} \alpha_m =  \sqrt{\beta_m - k^2} > 0. 
\end{equation}
If condition  \eqref{eq:cond_k_beta} is satisfied, then 
$\gamma_m \geq \gamma_{\dagger} =  \mbox{min}(\gamma_{-1}, \gamma_1)$ for all $m \neq 0$. 

Since the BIC must decay to zero as $z\to \pm \infty$,
we have $c_0^\pm = 0$, and thus 
\begin{equation}
\label{eq:BIC_fourier} u_*({\bf r})  = \sum_{ m \neq 0}  c^{\pm}_m e^{
  \mathsf{ i } \beta_m y \mp \gamma_m z }, \quad \pm z > d. 
\end{equation}

\subsection{Diffraction solutions}
\label{sec:diffraction_solution}

For periodic structures with a dielectric function given in
section 2.1, we can 
formulate diffraction problems by specifying plane incident waves in
the surrounding homogeneous media. Mathematical
theories of diffraction problems were
established several decades ago~\cite{bonnet94,bao94}.
It is well-known that a diffraction problem  has a unique solution for all but a sequence of countable frequencies.
If there is a BIC, the corresponding   diffraction problem 
at the frequency and Bloch wavenumber of  the
BIC has nonunique solutions~\cite{bonnet94,shipman07}. 

If there is a nondegenerate BIC $\{ u_*, \beta_*, k_*\}$ in a periodic structure
with $\epsilon_*$ satisfying \eqref{eq:yeven_eps} and
\eqref{eq:zeven_eps},
and the BIC is even in $z$ and satisfies condition
\eqref{eq:cond_k_beta},  we consider the following incident wave
\begin{equation}
  \label{eq:incidentwave}
  u_*^{(\text{in})}({\bf r}) =  \left\{ \begin{array}{ll} 
                                                    e^{ \mathsf{ i } ( \beta_* y -  \alpha_0 z)}, & z > d, \\
                                                    e^{ \mathsf{ i } (\beta_* y + \alpha_0 z)}, & z < - d,
                                                    \end{array} \right.
\end{equation}
and seek  a diffraction solution $v({\bf r})$ satisfying                                                 
\begin{eqnarray}
\label{eq:difffraction}    \left\{ \begin{array}{ll} \Delta v  + k^2_*
                                     \epsilon_*({\bf r}) v = 0,  & {\bf r} \in \Omega, \\
                          v(L/2,z) = e^{ \mathsf{ i } \beta_* L} v(-L/2,z), & z \in \mathbb{R},\\ 
                         \partial_y v (L/2,z) =  e^{ \mathsf{ i } \beta_* L} \partial_y v (-L/2,z),  & z \in \mathbb{R},\\
                          v - u_*^{(\text{in})} \mbox{ is outgoing as }     |z| \to  \infty, &| y | < L/2.
                          \end{array} \right.
\end{eqnarray}
Since $u_*^{(\text{in})}$ and $\epsilon_*$ are both even in $z$, we assume $v$ is
also even in $z$. The solution can be written as 
\begin{equation*}
v({\bf r}) = e^{ \mathsf{ i } (  \beta_* y \mp \alpha_0 z )} + \rho e^{ \mathsf{ i } (  \beta_*
  y \pm \alpha_0 z )} +  \sum_{m \neq 0} a_m e^{  \mathsf{ i } \beta_m y \mp \gamma_m z}, \quad \pm  z > d, 
\end{equation*}
where $a_m$ ($m\ne 0$) are coefficients and $|\rho| = 1$ due to energy conservation. Let $\rho = e^{2 \mathsf{i}
  \theta}$ for a real $\theta$, then $v_* = e^{-\mathsf{i} \theta} v$ is a
diffraction solution for incident wave $e^{-\mathsf{i} \theta} u_*^{(\text{in})}$. It can be expanded as
\begin{equation}
\label{eq:v_fourier}
v_*({\bf r}) = e^{- \mathsf{i} \theta} e^{ \mathsf{ i } (  \beta_* y \mp \alpha_0 z )} + e^{ \mathsf{i} \theta} e^{ \mathsf{ i } (  \beta_*
  y \pm \alpha_0 z )} +  e^{- \mathsf{i} \theta} \sum_{m \neq 0} a_m e^{  \mathsf{ i } \beta_m y \mp \gamma_m z}, \quad \pm  z > d.
\end{equation}
It is easy to verify that $ \overline{v}_*(-y,z)$ is also a 
diffraction solution for the same incident wave. Therefore, we can assume $v_* $
satisfies the following $\mathcal{PT}$-symmetry  condition
\begin{equation}
\label{eq:PTsym_v} v_*({\bf r}) =  \overline{v}_*(-y,z) , \quad 
\forall {\bf r} \in \mathbb{R}^2.
\end{equation}
Since the BIC is nondegenerate,   $v_* + \eta u_*$ is also a
solution for any constant $\eta$. We can assume that the diffraction solution $v_*$ is orthogonal to the BIC, that is,
\begin{equation}
    \label{eq:orth_v} \int_{\Omega} \overline{v}_* \epsilon_* u_* d \mathbf{r} = 0.
\end{equation}

Similarly, by choosing incident plane waves with opposite signs for
$z>d$ and $z<-d$, we can obtain a diffraction solution
that is odd in $z$ and $\mathcal{PT}$-symmetric in $y$.


\section{Inhomogeneous Helmholtz equation: solvability}
\label{sec:solvability}
In this section, we analyze an inhomogeneous Helmholtz equation with the
same frequency and Bloch wave number as the BIC. As in the previous
section, we assume the periodic structure is described by the
dielectric function $\epsilon_*$ 
 satisfying \eqref{eq:period_eps},
\eqref{eq:yeven_eps} and \eqref{eq:zeven_eps}, and that the BIC $\{ u_*,
\beta_*, k_*\}$ is nondegenerate, even in $z$, and satisfies
\eqref{eq:cond_k_beta}. Since the structure and the BIC are both even
in $z$, we restrict our attention to inhomogeneous equations for which
the right-hand side $f({\bf r})$ 
and the solution $w({\bf r})$  are
also even in $z$. Consequently, the problem can be formulated on 
$$ \Omega^+ = \left\{ (y,z) \in \mathbb{R}^2 :  |y| < L/2, z > 0
\right\}. $$

The inhomogeneous boundary value problem (BVP) is 
\begin{eqnarray}
\label{eq:BVP1}    \left\{ \begin{array}{ll} \Delta w + k_*^2
                             \epsilon_*({\bf r})  w =  f({\bf r}),  & {\bf r} \in \Omega^+, \\
                          w(L/2,z) = e^{ \mathsf{ i } \beta_* L} w(-L/2,z), &  z > 0,\\ 
\partial_y w (L/2,z) =  e^{ \mathsf{ i } \beta_* L}\partial_y w(-L/2,z),  & z > 0,\\
\partial_z w(y,0) = 0, & |y| < L/2, \\
                          w  \quad \mbox{is outgoing as} \quad  z \to + \infty, & | y | < L/2. 
                          \end{array} \right. 
\end{eqnarray}
In particular, we assume that 
\begin{equation}
\label{eq:f_cond1} f({\bf r}) =  \sum_{m\neq 0}  P_{m}(z) e^{ \mathsf{ i } \beta_m 
  y - \gamma_m z} , \quad z > d, 
\end{equation}
where $P_m(z)$ are polynomials in $z$. The objective of this section is two-fold. First,  we
 show that BVP \eqref{eq:BVP1} has a solution
 if  $f$  satisfies
 \begin{eqnarray}
&& \label{eq:f_cond2} \int_{\Omega^+} \overline{u}_*({\bf r})  f({\bf r})  d {\bf r} = 0.
 \end{eqnarray}
Second, we show that if $f$ also satisfies
\begin{equation} \label{eq:f_cond3}
\int_{\Omega^+} \overline{v}_*({\bf r})  f({\bf r}) d {\bf r} = 0,
\end{equation}
then the solution $w$ of BVP \eqref{eq:BVP1}  satisfies
$w \to 0$ as $z \to +\infty$.

Note that the dielectric function $\epsilon_* = 1$  for $z > d$, if BVP \eqref{eq:BVP1} has a solution $w$, then it can be expanded as
\begin{equation}
\label{eq:w_fourier2} w(y,z) = q_0 e^{ \mathsf{ i } (\beta_* y + \alpha_0 z)} + \sum\limits_{ m\neq 0} Q_m(z) e^{  \mathsf{ i } \beta_m y - \gamma_m z }, \quad z > d,
\end{equation}
where $q_0$ is a constant, and for $m\neq 0$, $Q_m(z)$  satisfies
\begin{eqnarray}
\label{eq:Q_m} \left\{ \begin{array}{ll}
Q''_m(z) - 2 \gamma_m Q'_m(z) = P_m(z), & z > d, \\
Q_m(z) e^{- \gamma_m z} \to 0, &  z \to +\infty.
\end{array}
 \right.
\end{eqnarray}
The condition that $w \to 0$ as $z \to +\infty$ is equivalent to $q_0 = 0$.

\subsection{Spaces}
As in \cite{bonnet94}, the following domains will be used.
\begin{itemize}
\item[(1)] $\Omega_d = \left\{ (y,z) \in \mathbb{R}^2 : | y | < L/2, 0 < z < d \right\}$

\item[(2)] $\Gamma_d = \left\{ (y,z) \in \mathbb{R}^2  : | y | < L/2, z = d \right\}, $

 \item[(3)] $\Gamma_0 = \left\{ (y,z) \in \mathbb{R}^2  :  | y | < L/2, z  = 0 \right\},$
 
 \item[(4)] $\Omega_d^c = \Omega^+ \backslash \overline{\Omega}_d = \left\{ (y,z) \in \Omega^+ : (y,z) \notin  \overline{\Omega}_d \right\}$ ,  where $\overline{\Omega}_d$ is the closure of $\Omega_d$,
\item[(5)] $ \Gamma =  \left\{ y \in \mathbb{R}  :  | y | < L/2 \right\}. $
\end{itemize}
The following spaces will be used.  These spaces have been studied in \cite{alber79,wilcox84}.
\begin{itemize}
\item[(1)] $C^{\infty}_{\beta}(\mathbb{R}^2)$ is the set of all functions which are $C^{\infty}$ on $\mathbb{R}^2$, satisfy the qusi-periodic condition in $y$ and vanish for large $z$.

\item[(2)] $C^{\infty}_{\beta}(\Omega_d)$ is the set of the restrictions to $\Omega_d$ of all functions of  $C^{\infty}_{\beta}(\mathbb{R}^2)$.

\item[(3)] $ H^1_{\beta}(\Omega_d)$ is the smallest closed subspace of $H^{1}(\Omega_d)$ which contains  $C^{\infty}_{\beta}(\Omega_d)$.

\item[(4)] $ H^1_{\beta, 0}(\Omega_d)$ is the space of all functions  $ w \in H^{1}_{\beta}(\Omega_d)$ such that $\partial_z w = 0$ on $\Gamma_0$.

\item[(5)] $H^{1/2}_{\beta}(\Gamma) :=  \left\{ v = \sum\limits_{m=-\infty}^{\infty} v_m e^{ \mathsf{ i } \beta_m y} : \sum\limits_{m=-\infty}^{\infty} \left( 1 + \beta^2_m \right)^{1/2} |v_m|^2 < \infty  \right\} ,$ with norm
$$ \| v \|^2_{H^{1/2}_{\beta}(\Gamma)} =  L \sum\limits_{m=-\infty}^{\infty} \left( 1 + \beta^2_m \right)^{1/2} |v_m|^2.$$
This norm is equivalent to the classical $H^{1/2}$-norm on $H^{1/2}_{\beta}(\Gamma) $. 

\item[(6)] $H^{-1/2}_{\beta}(\Gamma): = \left\{ s = \sum\limits_{m=-\infty}^{\infty} s_m e^{ \mathsf{ i } \beta_m y} : \sum\limits_{m=-\infty}^{\infty} \left( 1 + \beta^2_m \right)^{-1/2} |s_m|^2 < \infty  \right\}$ is the dual space of $H^{1/2}_{\beta}(\Gamma) $. The norm is
$$ \| s \|^2_{H^{-1/2}_{\beta}(\Gamma)} =  L \sum\limits_{m=-\infty}^{\infty} \left( 1 + \beta^2_m \right)^{-1/2} |s_m|^2.$$
The duality product between $H^{1/2}_{\beta}(\Gamma)$ and $H^{-1/2}_{\beta}(\Gamma)$ is defined as
\begin{equation}
\label{eq:duality_pairing} \left< s,v \right>  = L \sum_{m=-\infty}^{\infty} s_m \overline{v}_m,
 \end{equation}
for any $s \in H^{-1/2}_{\beta}(\Gamma)$ and $v \in H^{1/2}_{\beta}(\Gamma)$.

\item[(7)]  Throughout this paper, we use $(\cdot, \cdot)_X$ to denote the standard inner product of the Hilbert space $X$, and define $\| u \|_X = \sqrt{(u, u)_X}$ for any $u \in X$.
\end{itemize}

\subsection{Truncation of the domain}
\label{sec:truncation}

For each $v \in H^{1/2}_{\beta}(\Gamma)$, define the operator $\mathcal{T} : H^{1/2}_{\beta}(\Gamma) \to H^{-1/2}_{\beta}(\Gamma)$  by
\begin{equation}
\label{eq:T_operator}\mathcal{T} v = \sum_{m=-\infty}^{\infty} \mathsf{i} \alpha_m v_m e^{ \mathsf{ i } \beta_m y} ,
\end{equation}
where $\{ v_m \}_{m=-\infty}^{\infty}$ are the Fourier coefficients of $v$.
Note that the operator $\mathcal{T}$ is linear. Using the operator $\mathcal{T}$ and the expression \eqref{eq:w_fourier2}, we obtain the following boundary condition for $w$ at $z = d$:
\begin{equation}
\label{eq:BC_w_D} \left. \frac{\partial w}{\partial z} \right|_{(y,d)} = \mathcal{T} w(y,d) + h(y), 
 \end{equation}
 where 
 \begin{equation}
\label{eq:h} h(y) = \sum\limits_{ m\neq 0} Q'_m(d) e^{  \mathsf{ i } \beta_m y - \gamma_m d}.
\end{equation}
Note that the function $h(y)$ depends only on  $f$. If $f \equiv 0$, then $h \equiv 0$.

With the boundary condition \eqref{eq:BC_w_D}, we obtain the following BVP  on the finite domain $\Omega_d $: Finding $w$ such that
 \begin{eqnarray}
\label{eq:BVP2}    \left\{ \begin{array}{ll} \Delta w + k_*^2 \epsilon_* w =  f,  & (y,z) \in \Omega_d, \\
                          w(L/2,z) = e^{ \mathsf{ i } \beta_* L} w(-L/2,z), &  0 < z <d ,\\ 
                           {\partial_y w}(L/2,z) =  e^{ \mathsf{ i } \beta_* L} {\partial_y w}(-L/2,z),  & 0 < z < d,\\
                            {\partial_z w} = 0, & (y,z) \in \Gamma_0, \\
                          {\partial_z w} = \mathcal{T} w + h(y), & (y,z) \in \Gamma_d.
                          \end{array} \right.
\end{eqnarray}

It is easy to check that the restriction of BIC $u_*$ to $\Omega_d$ is a non-trivial solution of BVP \eqref{eq:BVP2} with $f \equiv 0$. Therefore,  BVP \eqref{eq:BVP2} is singular. 

We show that BVPs \eqref{eq:BVP1}  and  \eqref{eq:BVP2} are equivalent.
\begin{theorem}
\label{theorem:equivalence} Let $f \in L^2(\Omega^+)$ satisfy \eqref{eq:f_cond1}, and $h$ be defined by \eqref{eq:h}. If $w$ is a solution of  BVP \eqref{eq:BVP1}, then its restriction to $\Omega_d$, denoted $w|_{\Omega_d}$, is a solution of  BVP \eqref{eq:BVP2}. Conversely, if $w$ is a solution of  BVP \eqref{eq:BVP2}, then it can be extended to a solution of BVP \eqref{eq:BVP1}.
\end{theorem}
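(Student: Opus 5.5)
The plan is to prove each direction separately, using the Fourier expansion in $y$ on the homogeneous region $z>d$, where $\epsilon_*=1$.

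\emph{From BVP \eqref{eq:BVP1} to BVP \eqref{eq:BVP2}.} Let $w$ solve BVP \eqref{eq:BVP1}. On $\Omega_d$ the equation, the quasi-periodic conditions and the Neumann condition on $\Gamma_0$ hold trivially, so only the condition on $\Gamma_d$ needs checking. For $z>d$ I expand $w(\cdot,z)$ in the basis $e^{\mathsf{i}\beta_m y}$ and expand $f$ via \eqref{eq:f_cond1}. This turns the Helmholtz equation into ODEs for the coefficients. The $m=0$ coefficient satisfies $w_0''+\alpha_0^2 w_0=0$, and the outgoing condition leaves only $q_0 e^{\mathsf{i}\alpha_0 z}$. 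For $m\neq 0$, writing $w_m(z)=Q_m(z)e^{-\gamma_m z}$ gives $Q_m''-2\gamma_m Q_m'=P_m$. The outgoing (decaying/bounded) condition removes the $e^{+\gamma_m z}$ homogeneous solution, and this yields exactly \eqref{eq:w_fourier2} and \eqref{eq:Q_m}.

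Differentiating in $z$ and evaluating at $z=d$ gives
\[
w_m'(d)=-\gamma_m Q_m(d)e^{-\gamma_m d}+Q_m'(d)e^{-\gamma_m d}=\mathsf{i}\alpha_m w_m(d)+Q_m'(d)e^{-\gamma_m d},
\]
since $\mathsf{i}\alpha_m=-\gamma_m$. For $m=0$ we get $w_0'(d)=\mathsf{i}\alpha_0 w_0(d)$. Summing over $m$ gives \eqref{eq:BC_w_D}, i.e.\ $\partial_z w=\mathcal{T}w+h$ on $\Gamma_d$, understood in $H^{-1/2}_\beta(\Gamma)$. This uses the standard trace results: $w|_{\Gamma_d}\in H^{1/2}_\beta(\Gamma)$ and, because $\Delta w\in L^2$, the normal derivative lies in $H^{-1/2}_\beta(\Gamma)$, with coefficientwise identification.

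\emph{From BVP \eqref{eq:BVP2} to BVP \eqref{eq:BVP1}.} Let $w$ solve BVP \eqref{eq:BVP2} and let $w_m$ be the Fourier coefficients of its trace on $\Gamma_d$.
\begin{itemize}
\item For each $m\neq0$, I take the unique solution of \eqref{eq:Q_m} with polynomial growth. It is obtained explicitly as $Q_m(z)=\widetilde Q_m(z)+c_m$, where $\widetilde Q_m$ is a polynomial particular solution. The constant $c_m$ is fixed by $Q_m(d)e^{-\gamma_m d}=w_m$.
\item For $m=0$, I set $q_0 e^{\mathsf{i}\alpha_0 d}=w_0$.
\item I then define the extension for $z>d$ by \eqref{eq:w_fourier2}. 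It satisfies the Helmholtz equation with source $f$, is quasi-periodic, and is outgoing.
\end{itemize}

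The extension has the same Dirichlet trace on $\Gamma_d$ as $w$ by construction. Its normal derivative from above equals $\mathcal{T}w+h$ by the computation in the first direction, and this equals $\partial_z w$ from below by the last condition of BVP \eqref{eq:BVP2}. Matching traces and normal derivatives means the glued function is in $H^1_{\mathrm{loc}}$ and satisfies the equation weakly across $\Gamma_d$. To see this, test against a quasi-periodic $C^\infty$ function with compact support in $z$ and integrate by parts in $\Omega_d$ and in $\Omega_d^c$; the boundary terms on $\Gamma_d$ cancel through the duality pairing \eqref{eq:duality_pairing}.

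\emph{Main obstacle.} The delicate step is convergence and regularity of the series in the exterior, namely showing that the extension lies in $H^1$ of bounded pieces of $\Omega_d^c$ and that $h\in H^{-1/2}_\beta(\Gamma)$. For the homogeneous part, the factors $e^{-\gamma_m(z-d)}$ with $\gamma_m\sim|\beta_m|$ give the standard estimate: $\|w\|_{H^1}$ on a strip is controlled by $\sum_m(1+\beta_m^2)^{1/2}|w_m|^2$, i.e.\ by the $H^{1/2}_\beta$ norm of the trace. For the part coming from $f$, I would use $f\in L^2$ and Parseval to get $\sum_m\int|P_m|^2e^{-2\gamma_m z}dz<\infty$. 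Then I would bound $Q_m'(d)$ through the variation-of-constants formula $Q_m'(z)=-\int_z^\infty e^{-2\gamma_m(t-z)}P_m(t)\,dt$, which gives $|Q_m'(d)e^{-\gamma_m d}|\lesssim\gamma_m^{-1/2}\|P_me^{-\gamma_m\cdot}\|_{L^2(d,\infty)}$. Hence $h\in H^{1/2}_\beta\subset H^{-1/2}_\beta$, and the particular part of the extension lies in $H^1$ on bounded pieces of $\Omega_d^c$ by the same kind of bound.
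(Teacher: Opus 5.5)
Your proposal is correct and follows the same route as the paper. The forward direction is the coefficientwise derivation of $\partial_z w = \mathcal{T} w + h$ on $\Gamma_d$. The converse extends $w$ by \eqref{eq:w_extend}, matching $q_0$ and $Q_m(d)$ to the Fourier coefficients of the trace $w(\cdot,d)$ so that $w$ and $\partial_z w$ are continuous across $z=d$. Your extra material — the weak gluing argument, the $H^1$ convergence of the exterior series, and $h\in H^{1/2}_{\beta}(\Gamma)$ via $|Q_m'(d)|e^{-\gamma_m d}\le (2\gamma_m)^{-1/2}\|P_m e^{-\gamma_m z}\|_{L^2(d,\infty)}$ — only fills in details the paper calls easily verified.
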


\begin{proof}
The first part follows immediately from the derivation of the boundary condition \eqref{eq:BC_w_D}.  Conversely, assume that $\tilde{w}$ is a solution of BVP \eqref{eq:BVP2},  and define
\begin{equation}
\label{eq:w_extend}
{w} = \left\{ \begin{array}{ll} 
\tilde{w}, & (y,z) \in \Omega_d \\
q_0 e^{ \mathsf{ i } (\beta_* y + \alpha_0 z)} + \displaystyle \sum\limits_{m\neq 0} {Q}_m(z) e^{  \mathsf{ i } \beta_m y - \gamma_m z}, & (y,z) \in \Omega_d^c,
\end{array} \right.
\end{equation}
where $q_0 = \frac{e^{-i \alpha_0 d}}{L} \int_{-L/2}^{L/2} \tilde{w}(y,d) e^{-\mathsf{ i } \beta_* y} dy$, and  $Q_m(z)$ for $m\neq 0$ are the polynomials that satisfy \eqref{eq:Q_m}, and 
$$ Q_m(d) = \frac{e^{\gamma_m d}}{L} \int_{-L/2}^{L/2} \tilde{w}(y,d) e^{-\mathsf{ i } \beta_m y} dy.  $$
With the definition of $h(y)$ in \eqref{eq:h}, one can easily verify that $w$  and $\partial_z w$ are continuous at $z=d$. By the definition of $Q_m(z)$,  $w$  satisfies the inhomogeneous Helmholtz equation of BVP \eqref{eq:BVP1} in $\Omega_d^c$, and it is outgoing as $z \to + \infty$. Thus, $w$ is a solution of BVP \eqref{eq:BVP1}. 
\end{proof} 

In what follows, unless otherwise noted, the same notation will be used for the solutions of BVPs \eqref{eq:BVP1} and \eqref{eq:BVP2}.

\subsection{Existence}

We now prove that there exist solutions to BVPs \eqref{eq:BVP2} and \eqref{eq:BVP1} if $f$ satisfies \eqref{eq:f_cond2}  using a  method similar to that in \cite{bonnet94}. BVP \eqref{eq:BVP2}  admits the following variational formulation:
\begin{equation}
\label{eq:variation} \mbox{Find} \, \, w \in H^{1}_{\beta, 0}(\Omega_d) \, \, \mbox{such that} \, \,  \mathbf{a}(w,v) = \mathbf{l}(v), \, \, \, \,  \forall v \in H^1_{\beta, 0}(\Omega_d),
\end{equation}
where the forms $\mathbf{a}(w,v)$ and $\mathbf{l}(v)$ are defined, respectively, by
\begin{eqnarray}
\mathbf{a}(w,v) &=& \int_{\Omega_d} \left( \nabla w \cdot \nabla \overline{v} - k_*^2 \epsilon_* w \overline{v} \right) d {\bf r} - \left< \mathcal{T} w(y,d), v(y,d) \right>, \\
\mathbf{l}(v) &=&\int_{-L/2}^{L/2} h(y) \overline{v}(y,d) dy - \int_{\Omega_d} f \overline{v} d {\bf r},
\end{eqnarray}
and $\left< \cdot, \cdot \right>$ is defined by \eqref{eq:duality_pairing}. It can be readily checked that
$$ \left< \mathcal{T} w(y,d), v(y,d) \right> =  \int_{-L/2}^{L/2} \overline{v}(y,d) \mathcal{T} w(y,d) d y.$$

We decompose $\mathbf{a}(w,v)$ as
$$ \mathbf{a}(w,v) = \mathbf{b}(w,v) + \mathbf{c}(w,v), $$
where
\begin{equation*}
\label{eq:linear_b} \mathbf{b}(w,v) =  \int_{\Omega_d} \left[ \nabla w \cdot \nabla \overline{v} + \left(1 + k_*^2 \| \epsilon_*\|_{L^{\infty}(\Omega_d)} - k_*^2 \epsilon_* \right) w \overline{v} \right] d {\bf r} - \left<\mathcal{T} w(y,d), v(y,d) \right>, 
 \end{equation*}
$$  \mathbf{c}(w,v) = - \left(k_*^2 \| \epsilon_*\|_{L^{\infty}(\Omega_d)} + 1 \right)  \int_{\Omega_d} w \overline{v} d {\bf r}.$$

Sesquilinear forms $\mathbf{b}(w,v)$  and  $\mathbf{c}(w,v)$ are continuous and bounded. We can define two bounded operators $\mathbf{B}$ and $\mathbf{C}$ of $H^1_{\beta, 0}(\Omega_d)$ such that
$$ (\mathbf{B}w, v)_{H^1_{\beta,0}(\Omega_d)} = \mathbf{b}(w,v), \quad  (\mathbf{C}w, v)_{H^1_{\beta,0}(\Omega_d)} = \mathbf{c}(w,v), \quad   \forall u, v \in H^1_{\beta, 0}(\Omega_d).$$
Using a method similar to that in Lemma 3.1 of \cite{bonnet94}, it can be shown that operator $\mathbf{B}$ is invertible and operator $\mathbf{C}$ is compact on $H^1_{\beta,0}(\Omega_d)$. 

Let $w_{\dagger}$ denotes the unique element of $H^1_{\beta,0}(\Omega_d)$ such that 
$$ (w_{\dagger}, v)_{H^1_{\beta,0}(\Omega_d)}  = \mathbf{l}(v), \quad \forall v \in H^1_{\beta,0}(\Omega_d).$$
Then BVP \eqref{eq:BVP2} can be reformulated as follows:
\begin{equation}
\label{eq:operator}\mbox{Find} \, \, w \in H^1_{\beta,0}(\Omega_d) \, \,  \mbox{such that} \, \,  \mathbf{B} w + \mathbf{C}w = w_{\dagger}.
\end{equation}
The existence of  a solution to problem \eqref{eq:operator} is related to the existence of  a solution to the homogeneous problem
\begin{equation}
\label{eq:operator_homo}\mbox{Find} \, \, w \in H^1_{\beta,0}(\Omega_d) \, \,  \mbox{such that} \, \,  \mathbf{B} w + \mathbf{C}w = 0,
\end{equation}

Since BIC $u_*$ is a solution of problem \eqref{eq:operator_homo} and it  is nondegenerate, the solution space of problem \eqref{eq:operator_homo} is one-dimensional. By Fredholm alternative,  problem \eqref{eq:operator} [i.e., problem \eqref{eq:variation}, or equivalently, BVP \eqref{eq:BVP2}] has a solution if and only if $(w_{\dagger}, u_*)_{H^1_{\beta,0}(\Omega_d)} = \mathbf{l}(u_*) = 0$. We show that this condition holds if  $f$ satisfies \eqref{eq:f_cond2}. 

\begin{theorem}
\label{theorem:existenceBVP2} Let  $f \in L^2(\Omega^+)$ satisfy \eqref{eq:f_cond1} and \eqref{eq:f_cond2}, and  $h$ be defined by \eqref{eq:h}. Then there exists a solution to BVPs \eqref{eq:BVP2} and \eqref{eq:BVP1}.
\end{theorem}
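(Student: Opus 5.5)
By the Fredholm alternative discussion preceding the theorem, it suffices to verify $\mathbf{l}(u_*)=0$, i.e.
\[
\int_{-L/2}^{L/2} h(y)\,\overline{u}_*(y,d)\,dy \;=\; \int_{\Omega_d} f\,\overline{u}_*\,d{\bf r}.
\]
Once this holds, problem \eqref{eq:operator} has a solution in $H^1_{\beta,0}(\Omega_d)$. That solution is a weak solution of BVP \eqref{eq:BVP2}, and Theorem~\ref{theorem:equivalence} extends it to a solution of BVP \eqref{eq:BVP1}. The plan is to compute both sides using the explicit representations available for $z>d$, and then use \eqref{eq:f_cond2}, which asserts $\int_{\Omega_d} f\overline{u}_*\,d{\bf r} + \int_{\Omega_d^c} f\overline{u}_*\,d{\bf r}=0$.

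The left side is evaluated mode by mode. By \eqref{eq:BIC_fourier}, $\overline{u}_*(y,d)=\sum_{m\ne0}\overline{c}^+_m e^{-\mathsf{i}\beta_m y-\gamma_m d}$, since $\gamma_m$ is real. Combining this with \eqref{eq:h} and orthogonality of the exponentials on $\Gamma$ gives
\[
\int_{-L/2}^{L/2} h\,\overline{u}_*(y,d)\,dy = L\sum_{m\ne0} Q_m'(d)\,\overline{c}^+_m e^{-2\gamma_m d}.
\]
The $m=0$ mode does not appear because $c_0^+=0$.

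The right side is handled through the exterior integral. Using \eqref{eq:f_cond1} and \eqref{eq:BIC_fourier} again,
\[
\int_{\Omega_d^c} f\,\overline{u}_*\,d{\bf r} = L\sum_{m\ne0}\overline{c}^+_m\int_d^\infty P_m(z)e^{-2\gamma_m z}\,dz.
\]
The ODE in \eqref{eq:Q_m} can be written as $\bigl(Q_m'e^{-2\gamma_m z}\bigr)'=P_m e^{-2\gamma_m z}$. Because $Q_m$ is a polynomial, $Q_m'e^{-2\gamma_m z}\to0$ as $z\to\infty$, so the inner integral equals $-Q_m'(d)e^{-2\gamma_m d}$. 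Hence $\int_{\Omega_d^c} f\overline{u}_*\,d{\bf r}=-\int h\,\overline{u}_*(y,d)\,dy$. Substituting this into \eqref{eq:f_cond2} yields exactly the identity $\mathbf{l}(u_*)=0$.

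The one delicate step is justifying the exchange of the infinite sum over $m$ with the integrals in $y$ and $z$. This needs summability of terms like $\sum|c_m^+||Q_m'(d)|e^{-2\gamma_m d}$. The BIC coefficients decay like $e^{\gamma_m d}\times\ell^2$, since $u_*$ has an $H^{1/2}$ trace, and $\gamma_m\sim|m|$. Together with the implicit requirement that $f\in L^2$ and $h\in H^{-1/2}_\beta(\Gamma)$, this gives absolute convergence by Cauchy--Schwarz. In the paper's setting the sums can also be regarded as Parseval pairings in $L^2(\Gamma)$ for each fixed $z$, with the $z$-integral then controlled by dominated convergence using $f\in L^2(\Omega^+)$ and the exponential decay of $u_*$. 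I would present this as the main technical point and otherwise treat the computation as routine.
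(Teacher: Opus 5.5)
Your proposal is correct and follows essentially the same route as the paper: reduce to $\mathbf{l}(u_*)=0$ via the Fredholm alternative, compute $\int_{\Omega_d^c} f\,\overline{u}_*\,d{\bf r}$ and $\int_{-L/2}^{L/2} h\,\overline{u}_*(y,d)\,dy$ mode by mode, use $(Q_m'e^{-2\gamma_m z})'=P_m e^{-2\gamma_m z}$ to show they are negatives of each other, and conclude with \eqref{eq:f_cond2} and Theorem~\ref{theorem:equivalence}. Your added justification of the interchange of the $m$-sum with the integrals (Parseval in $y$ plus Cauchy--Schwarz, using $f,u_*\in L^2$) is a welcome detail that the paper leaves implicit.
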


\begin{proof} 
We only need to show 
$$\mathbf{l}(u_*) = \int_{\Omega_d} \overline{u}_* f d {\bf r} -\int_{-L/2}^{L/2} \overline{u}_*(y,d) h(y) dy = 0.$$
Using \eqref{eq:BIC_fourier} and \eqref{eq:f_cond1}, we have
\begin{eqnarray*} 
\int_{\Omega_d^c} \overline{u}_* f d {\bf r} = \int_{d}^{+\infty} dz \int_{-L/2}^{L/2}   \overline{u}_* f dy  =
L \sum\limits_{m\neq 0} \overline{c}^+_m \int_{D}^{+\infty} P_m(z)  e^{-2 \gamma_m z } dz.
\end{eqnarray*}
Using \eqref{eq:Q_m} to eliminate $P_m(z)$, and applying integration by parts,  we have
 \begin{eqnarray*} 
\int_{\Omega_d^c} \overline{u}_* f d {\bf r} = - L \sum\limits_{m\neq 0} \overline{c}^+_m Q'_m(d) e^{-2 \gamma_m d}.
\end{eqnarray*}
On the other hand, it is easy to verify that
$$ \int_{-L/2}^{L/2} \overline{u}_*(y,d) h(y) dy = L \sum\limits_{m\neq 0} \overline{c}^+_m Q'_m(d) e^{-2 \gamma_m d}. $$
Comparing the right-hand sides of the above two equations, we have
$$ \int_{-L/2}^{L/2} \overline{u}_*(y,d) h(y) dy = - \int_{\Omega_d^c} \overline{u}_* f d {\bf r}.$$
Therefore, 
$$ \int_{\Omega_d} \overline{u}_* f d {\bf r} - \int_{-L/2}^{L/2} \overline{u}_*(y,d) h(y) dy = \int_{\Omega_d} \overline{u}_* f d {\bf r}  + 
\int_{\Omega_d^c} \overline{u}_* f d {\bf r}  = \int_{\Omega^+} \overline{u}_* f d {\bf r} = 0.   $$ 
The last equation is due to \eqref{eq:f_cond2}.  By Fredholm alternative,  problem \eqref{eq:operator} [i.e.,  BVP \eqref{eq:BVP2}] has a solution. By Theorem \ref{theorem:equivalence}, this solution can be extended to a solution of BVP \eqref{eq:BVP1}.
\end{proof}

\noindent {\bf Remark }: Due to the existence of BIC $u_*$, solutions to BVPs \eqref{eq:BVP1} and \eqref{eq:BVP2} are not unique. More specifically, if $w$ is a solution, then $w + \eta u_*$ is also a solution for any constant $\eta$. On the other hand, since we assume that the BIC is nondegenerate, all solutions can be expressed as $w + \eta u_*$.

\subsection{Decaying property}
Next, we show that if $f$ also satisfies \eqref{eq:f_cond3}, then solutions of BVP \eqref{eq:BVP1} decay to zero as $z \to +\infty$. 
To this end, we need the following lemma, which can be proved by a procedure similar to that used in Theorem \ref{theorem:existenceBVP2}.
\begin{lemma}
\label{lemma:f_cond3} 
Let $f \in L^2(\Omega^+)$  satisfy  \eqref{eq:f_cond1}, and $h$ be defined by \eqref{eq:h}. Then
$$\int_{\Omega_d} \overline{v}_* f d {\bf r} - \int_{-L/2}^{L/2} \overline{v}_*(y,d) h(y) dy = 0.$$
\end{lemma}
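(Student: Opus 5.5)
The plan is to copy the proof of Theorem~\ref{theorem:existenceBVP2}, with the BIC $u_*$ replaced by the diffraction solution $v_*$. The point is to turn the boundary term on $\Gamma_d$ into a volume integral over the exterior region $\Omega_d^c$. I should say at the outset that this computation identifies the left-hand side with $\int_{\Omega^+}\overline{v}_* f\,d{\bf r}$. It does not show that this quantity vanishes from \eqref{eq:f_cond1} alone, so the final step below needs \eqref{eq:f_cond3}, which is the standing hypothesis of this subsection.

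\textbf{Step 1: exterior integral.} For $z>d$, use the expansion \eqref{eq:v_fourier} of $v_*$ together with \eqref{eq:f_cond1}. The two propagating terms $e^{\mp\mathsf{i}\theta}e^{\mathsf{i}(\beta_* y\mp\alpha_0 z)}$ carry the $y$-frequency $\beta_*$, and $f$ has no $m=0$ component. By orthogonality of $\{e^{\mathsf{i}\beta_m y}\}$ on $(-L/2,L/2)$, only the evanescent modes contribute:
\[
\int_{\Omega_d^c}\overline{v}_* f\,d{\bf r}=L\,e^{\mathsf{i}\theta}\sum_{m\neq0}\overline{a}_m\int_d^{\infty}P_m(z)e^{-2\gamma_m z}\,dz .
\]
Next substitute $P_m=Q_m''-2\gamma_mQ_m'$ from \eqref{eq:Q_m}. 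Note that $(Q_m' e^{-2\gamma_m z})'=(Q_m''-2\gamma_mQ_m')e^{-2\gamma_m z}$, so the integrand is an exact derivative. The term at infinity vanishes because $Q_m$ is a polynomial. This gives
\[
\int_{\Omega_d^c}\overline{v}_* f\,d{\bf r}=-L\,e^{\mathsf{i}\theta}\sum_{m\neq0}\overline{a}_m Q_m'(d)e^{-2\gamma_m d}.
\]

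\textbf{Step 2: boundary term.} Insert \eqref{eq:h} and the trace of \eqref{eq:v_fourier} at $z=d$. Again only the modes with $m\neq0$ survive, and
\[
\int_{-L/2}^{L/2}\overline{v}_*(y,d)h(y)\,dy=L\,e^{\mathsf{i}\theta}\sum_{m\neq0}\overline{a}_mQ_m'(d)e^{-2\gamma_m d},
\]
which is the negative of the exterior integral from Step~1. The interchange of sums and integrals is justified because $\gamma_m\sim 2\pi|m|/L$ grows, so all the series converge absolutely. Combining Steps~1 and~2 gives the left-hand side of the lemma as $\int_{\Omega_d}\overline{v}_*f\,d{\bf r}+\int_{\Omega_d^c}\overline{v}_*f\,d{\bf r}=\int_{\Omega^+}\overline{v}_*f\,d{\bf r}$.

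\textbf{Step 3: vanishing, and the obstacle.} To reach zero I would invoke \eqref{eq:f_cond3}, exactly as \eqref{eq:f_cond2} was used at the end of Theorem~\ref{theorem:existenceBVP2}. I do not see how to avoid this. A cross-check with Green's identity confirms it: take a solution $w$ of BVP~\eqref{eq:BVP2} and substitute $\partial_z w=\mathcal{T}w+h$. In the resulting boundary terms the evanescent contributions cancel and only the $m=0$ mode remains, which gives
\[
\int_{\Omega_d}\overline{v}_*f\,d{\bf r}-\int_{-L/2}^{L/2}\overline{v}_*(y,d)h\,dy=2\mathsf{i}\alpha_0L\,e^{-\mathsf{i}\theta}q_0 .
\]
This is generally nonzero when only \eqref{eq:f_cond1} holds. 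So the decisive step is precisely the use of \eqref{eq:f_cond3}, and I expect Step~3 to be the main obstacle. I would read the lemma as applying in this subsection's setting, where $f$ satisfies \eqref{eq:f_cond3}. The same identity then also yields $q_0=0$, which is the decay property being sought.
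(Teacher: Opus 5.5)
Your argument is the route the paper indicates: repeat the computation of Theorem~\ref{theorem:existenceBVP2} with $u_*$ replaced by $v_*$, which reduces the left-hand side to $\int_{\Omega^+}\overline{v}_* f\,d{\bf r}$ because the propagating part of $v_*$ pairs only with the absent $m=0$ component of $f$. You are also right that the final vanishing needs \eqref{eq:f_cond3}, which the printed statement omits (any $f$ supported in $\Omega_d$, so that $h\equiv 0$, with $\int_{\Omega_d}\overline{v}_*f\,d{\bf r}\neq 0$ is a counterexample), but the lemma is only used in Theorem~\ref{theorem:decay}, where \eqref{eq:f_cond3} is assumed.
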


We are now in a position to prove the existence of a decaying solution.
\begin{theorem}
\label{theorem:decay} Let $f \in L^2(\Omega^+)$ satisfy \eqref{eq:f_cond1} -- \eqref{eq:f_cond3}. Then BVP \eqref{eq:BVP1} admits a solution satisfying $w \to 0$ as $z \to +\infty$.
\end{theorem}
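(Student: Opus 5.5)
By Theorem \ref{theorem:existenceBVP2}, conditions \eqref{eq:f_cond1} and \eqref{eq:f_cond2} already give a solution $w$ of BVP \eqref{eq:BVP1}. Since $w\to 0$ as $z\to+\infty$ is equivalent to $q_0=0$ in the expansion \eqref{eq:w_fourier2}, it suffices to show that every solution has $q_0=0$. This is consistent with the Remark: any other solution is $w+\eta u_*$, and $u_*$ has no $e^{\mathsf{i}(\beta_* y+\alpha_0 z)}$ component, so $q_0$ is the same for all solutions. The plan is to compute $q_0$ through a Green identity pairing $w$ against the diffraction solution $v_*$, and then apply Lemma \ref{lemma:f_cond3} together with \eqref{eq:f_cond3}.

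First I restrict to $\Omega_d$, where $w$ solves BVP \eqref{eq:BVP2}. Since $v_*$ is even in $z$, we have $\partial_z v_*=0$ on $\Gamma_0$. I integrate $\overline{v}_*(\Delta w+k_*^2\epsilon_* w)-w\,\overline{(\Delta v_*+k_*^2\epsilon_* v_*)}$ over $\Omega_d$. Because $w$ and $v_*$ carry the same factor $e^{\mathsf{i}\beta_* L}$ in their quasi-periodic conditions, the lateral boundary terms cancel, and the $\Gamma_0$ terms vanish. This leaves
\begin{equation*}
\int_{\Omega_d}\overline{v}_* f\,d{\bf r}=\int_{-L/2}^{L/2}\bigl(\overline{v}_*\,\partial_z w-w\,\partial_z\overline{v}_*\bigr)\Big|_{z=d}\,dy .
\end{equation*}
Next I substitute $\partial_z w=\mathcal{T}w+h$ at $z=d$. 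The $h$ term reproduces $\int_{-L/2}^{L/2}\overline{v}_*(y,d)h(y)\,dy$, so by Lemma \ref{lemma:f_cond3} it cancels the left-hand side exactly. What remains is $0=\int_{-L/2}^{L/2}\bigl(\overline{v}_*\,\mathcal{T}w-w\,\partial_z\overline{v}_*\bigr)\big|_{z=d}\,dy$.

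The remaining step is to evaluate this Wronskian-type boundary term mode by mode, using the Fourier expansions \eqref{eq:v_fourier} of $v_*$ and \eqref{eq:w_fourier2} of $w$ at $z=d$. For $m\neq 0$, the evanescent parts of $w$ and $v_*$ pair through $\mathsf{i}\alpha_m=-\gamma_m$ with $\partial_z$ of $e^{-\gamma_m z}$, so their contributions cancel. I have to check this carefully, because $Q_m$ is a polynomial: $\mathcal{T}$ acts on the trace $w(y,d)$ and not on $Q_m'$, and that discrepancy is exactly what $h$ absorbed. For $m=0$, the outgoing part $q_0e^{\mathsf{i}\alpha_0 z}$ of $w$ pairs with the incoming and outgoing parts $e^{\mp\mathsf{i}\theta}e^{\mp\mathsf{i}\alpha_0 z}$ of $v_*$. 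The outgoing–outgoing Wronskian vanishes, and the outgoing–incoming one gives a nonzero multiple of $q_0$, roughly $2\mathsf{i}\alpha_0 L\,e^{-\mathsf{i}\theta}q_0$ up to conjugation and sign. Hence the boundary term equals $c\,\alpha_0 q_0$ with $c\neq0$, and since $\alpha_0>0$ we get $q_0=0$.

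Finally, hypothesis \eqref{eq:f_cond3} is used to replace $\int_{\Omega_d}$ by $\int_{\Omega^+}$, via the same integration by parts on $\Omega_d^c$ as in the proof of Theorem \ref{theorem:existenceBVP2}. Concretely, Lemma \ref{lemma:f_cond3} converts $\int_{\Omega_d}\overline{v}_*f\,d{\bf r}-\int\overline{v}_*h\,dy$ into $\int_{\Omega^+}\overline{v}_*f\,d{\bf r}$, up to the propagating $m=0$ contribution, so that piece vanishes by \eqref{eq:f_cond3}. I expect the main obstacle to be this bookkeeping: the $m=0$ component of $f$ is zero by \eqref{eq:f_cond1}, so $\overline{v}_*f$ on $\Omega_d^c$ involves only evanescent modes of $v_*$, and one must confirm that these cancel exactly against the $h$ term. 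Once that is verified, the identity forces $q_0=0$, so $w$ decays as $z\to+\infty$.
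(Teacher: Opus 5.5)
Your proposal is correct and follows essentially the same route as the paper: Green's identity against $v_*$ on $\Omega_d$, substitution of $\partial_z w=\mathcal{T}w+h$ at $z=d$, exact cancellation of the evanescent Wronskian terms so that only $2\mathsf{i}\alpha_0 L e^{-\mathsf{i}\theta}q_0$ survives, and Lemma~\ref{lemma:f_cond3} to make the right-hand side vanish. Your final paragraph also reads that lemma correctly: the same integration by parts as in Theorem~\ref{theorem:existenceBVP2} gives $\int_{\Omega_d}\overline{v}_*f\,d{\bf r}-\int_{-L/2}^{L/2}\overline{v}_*(y,d)h(y)\,dy=\int_{\Omega^+}\overline{v}_*f\,d{\bf r}$, which vanishes precisely because of \eqref{eq:f_cond3} (a hypothesis the paper's statement of the lemma leaves implicit), and since $f$ has no $m=0$ mode there is no extra propagating contribution to track.
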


\begin{proof}
By Theorem \ref{theorem:existenceBVP2}, conditions \eqref{eq:f_cond1} and \eqref{eq:f_cond2} ensure that BVP \eqref{eq:BVP2} has a solution $w$. Multiplying the equation of $w$ by $\overline{v}_*$ (the diffraction solution defined in Section~\ref{sec:diffraction_solution}),  integrating over the domain $\Omega_d$, and applying the integration by parts, we obtain
\begin{equation}
\label{eq:tmp_lemma_decay} \int_{\partial \Omega_d} \left( \overline{v}_* \frac{\partial w}{\partial \nu}  - w \frac{\partial \overline{v}_*}{\partial \nu} \right) d s = \int_{\Omega_d } \overline{v}_* f  d {\bf r}, 
 \end{equation}
where $\partial \Omega_d$ is the boundary of $\Omega_d$ and $\nu$ is the outward unit normal vector. In the above derivation, we have used the equation satisfied by $v_*$. Applying the quasi-periodic condition in the $y$-direction and the zero Neumann condition at $z=0$ to both $w$ and $v_*$,  we find that \eqref{eq:tmp_lemma_decay}  becomes 
$$\int_{-L/2}^{L/2} \left[ \overline{v}_*(y,d) \frac{\partial w}{\partial z}(y,d) - w(y,d) \frac{\partial \overline{v}_*}{\partial z} (y,d) \right] dy  = \int_{\Omega_d } \overline{v}_* f  d {\bf r}.$$
Using the boundary condition of $w$ at $z =d$ and the  expansion \eqref{eq:v_fourier} of $v_*$ for $z > d$, we have
$$  2 \mathsf{i} \alpha_0  e^{- \mathsf{i} (\theta + \alpha d)}   \int_{-L/2}^{L/2}  w(y,d) e^{-\mathsf{ i } \beta_* y} dy = \int_{\Omega_d } \overline{v}_* f  d {\bf r} - \int_{-L/2}^{L/2} \overline{v}_*(y,d) h(y) dy.$$
According to Lemma \ref{lemma:f_cond3}, the right-hand side of this equation is zero. Thus, 
$$\int_{-L/2}^{L/2}  w(y,d) e^{-\mathsf{ i } \beta_* y} dy = 0,$$ 
i.e., the zeroth-order Fourier coefficients of $w(y,d)$ is zero. By Theorem \ref{theorem:equivalence}, when extending $w$ to a solution (still denoted as $w$) of BVP \eqref{eq:BVP1} by \eqref{eq:w_extend}, we have $q_0 = 0$, and thus $w \to 0$ as $z \to +\infty$.
\end{proof}

\section{Inhomogeneous Helmholtz equation: boundness}
In this section, we  show that the solution of BVP \eqref{eq:BVP1},  orthogonal to the BIC $u_*$, can be bounded by the inhomogeneous term $f$.  First, we introduce the following lemma.

\begin{lemma}
\label{lemma:energy}  Let  $f \in L^2(\Omega^+)$ satisfy ~\eqref{eq:f_cond1}  and \eqref{eq:f_cond2}, and $h$ be defined by \eqref{eq:h}.  Then, there exists a constant $C$  such that
$$  \| w \|_{H^1(\Omega_d)} \leq C \left[ \| w \|_{L^2(\Omega_d)}  + \| f \|_{L^2(\Omega_d)} + \| h \|_{L^2(\Gamma)}  \right] , $$
where $w $ is a solution of BVP \eqref{eq:BVP2}, and $C$ is independent of $f, h$ and $w$.
\end{lemma}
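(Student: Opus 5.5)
This is a Gårding-type energy estimate, and I will obtain it by testing the variational formulation \eqref{eq:variation} with $v = w$ itself. Since $w$ solves BVP \eqref{eq:BVP2}, it satisfies $\mathbf{a}(w,w) = \mathbf{l}(w)$, that is,
\begin{equation*}
\int_{\Omega_d} |\nabla w|^2 d{\bf r} - k_*^2 \int_{\Omega_d} \epsilon_* |w|^2 d{\bf r} - \left< \mathcal{T} w(y,d), w(y,d)\right> = \int_{-L/2}^{L/2} h(y)\overline{w}(y,d) dy - \int_{\Omega_d} f \overline{w} d{\bf r}.
\end{equation*}
The plan is to take real parts and control the boundary term from the DtN operator $\mathcal{T}$ by its sign structure.

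\textbf{Step 1: the boundary term has a good sign.} From \eqref{eq:duality_pairing} and \eqref{eq:T_operator},
\begin{equation*}
\left< \mathcal{T} w, w\right> = L\sum_m \mathsf{i}\alpha_m |w_m|^2 = \mathsf{i} L\alpha_0 |w_0|^2 - L\sum_{m\neq0}\gamma_m |w_m|^2 .
\end{equation*}
Hence $-\mbox{Re}\left< \mathcal{T} w, w\right> = L\sum_{m\ne 0}\gamma_m|w_m|^2 \ge 0$. Taking real parts gives
\begin{equation*}
\|\nabla w\|^2_{L^2(\Omega_d)} \le k_*^2\|\epsilon_*\|_{L^\infty}\|w\|^2_{L^2(\Omega_d)} + \|f\|_{L^2(\Omega_d)}\|w\|_{L^2(\Omega_d)} + \|h\|_{L^2(\Gamma)}\|w(\cdot,d)\|_{L^2(\Gamma)} .
\end{equation*}

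\textbf{Step 2: trace term (the main obstacle).} The only delicate point is the trace norm $\|w(\cdot,d)\|_{L^2(\Gamma)}$, which must be absorbed without reintroducing the full $H^1$ norm at order one. I will use the elementary one-dimensional trace inequality on the strip $\Omega_d$. For each $y$, write
\begin{equation*}
|w(y,d)|^2 \le \frac{1}{d}\int_0^d |w|^2 dz + 2\int_0^d |w||\partial_z w| dz ,
\end{equation*}
which follows by averaging $|w(y,d)|^2 - |w(y,z)|^2 = \int_z^d \partial_s |w|^2 ds$ over $z$, valid first for smooth $w$ and then by density in $H^1_{\beta,0}(\Omega_d)$. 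This gives
\begin{equation*}
\|w(\cdot,d)\|^2_{L^2(\Gamma)} \le C\left(\|w\|^2_{L^2} + \|w\|_{L^2}\|\nabla w\|_{L^2}\right).
\end{equation*}
Young's inequality then yields, for any $\eta>0$,
\begin{equation*}
\|h\|\,\|w(\cdot,d)\| \le \eta\|\nabla w\|^2 + C_\eta\left(\|w\|^2 + \|h\|^2\right).
\end{equation*}
Similarly, $\|f\|\,\|w\| \le \tfrac12\|f\|^2 + \tfrac12\|w\|^2$.

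\textbf{Step 3: conclusion.} Choosing $\eta = 1/2$, absorbing $\tfrac12\|\nabla w\|^2$ into the left side, and adding $\|w\|^2_{L^2}$ to both sides gives
\begin{equation*}
\|w\|^2_{H^1(\Omega_d)} \le C\left(\|w\|^2_{L^2(\Omega_d)} + \|f\|^2_{L^2(\Omega_d)} + \|h\|^2_{L^2(\Gamma)}\right).
\end{equation*}
Taking square roots gives the claimed estimate. The constant depends only on $k_*$, $\|\epsilon_*\|_{L^\infty}$, $d$ and $L$. Hypothesis \eqref{eq:f_cond2} is used only to guarantee that a solution $w$ exists (Theorem~\ref{theorem:existenceBVP2}); the estimate itself holds for any solution. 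One should also check that $h\in L^2(\Gamma)$ under \eqref{eq:f_cond1}, since it enters the right-hand side; this holds for the polynomial-coefficient form assumed for $f$.
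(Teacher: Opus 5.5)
Your proof is correct. It follows the same energy-method skeleton as the paper: test with $\overline{w}$, use the sign of the Dirichlet-to-Neumann term, then absorb the trace term with a small parameter. It differs from the paper at two points.

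\textbf{The boundary term.} The paper invokes Theorem~\ref{theorem:decay} to assert that the zeroth Fourier coefficient $w_0$ of $w(\cdot,d)$ vanishes. Then $\int_{-L/2}^{L/2}\overline{w}(y,d)\,\mathcal{T}w(y,d)\,dy=-L\sum_{m\neq 0}\gamma_m|w_m|^2$ is real and nonpositive. That step needs \eqref{eq:f_cond3}, which is not among the lemma's hypotheses. It is also not available for the sequence $w_n$ in the proof of Lemma~\ref{lemma:bound_inside}, where this lemma is applied. You instead note that $\alpha_0$ is real, so the $m=0$ contribution $\mathsf{i}L\alpha_0|w_0|^2$ is purely imaginary and drops out when you take real parts. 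This removes the dependence entirely. Your estimate holds for every solution of \eqref{eq:BVP2} with $h\in L^2(\Gamma)$, which is exactly what the statement claims and what the later compactness argument uses.

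\textbf{The trace term.} The paper uses the bounded trace map $H^1_{\beta,0}(\Omega_d)\to H^{1/2}_\beta(\Gamma)$ together with Cauchy's inequality with a small $\varepsilon$. You derive an elementary multiplicative trace inequality by hand. Both work. The plain trace bound already suffices once the parameter is small, so this step is less of an obstacle than you suggest. Your version has the minor advantage of being self-contained, with an explicit dependence of the constant on $d$.
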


\begin{proof} 
Multiplying the equation of $w$ in BVP \eqref{eq:BVP2} by $\overline{w}$,  integrating over the domain $\Omega_d$, and applying the integration by parts, we obtain
\begin{eqnarray} 
\label{eq_lemma_energy2}
  && \int_{\Omega_d} |\nabla w|^2 d {\bf r} - \int_{-L/2}^{L/2} \overline{w}(y,d) \mathcal{T} w(y,d) dy     \\ 
&&= k^2_* \int_{\Omega_d} \epsilon_* |w|^2 d {\bf r} + \int_{-L/2}^{L/2} h(y) \overline{w}(y,d) dy - \int_{\Omega_d} f \overline{w} d {\bf r}.  \nonumber 
\end{eqnarray}
By Theorem {\ref{theorem:decay}},  the zeroth-order Fourier coefficient of $w(y,d)$ being zero. Thus,
$$ \int_{-L/2}^{L/2} \overline{w}(y,d) \mathcal{T} w(y,d) dy = - L\sum\limits_{m\neq 0} \gamma_m |w_m|^2 \leq 0,  $$
where $w_m $ are the Fourier coefficients of $w(y,d)$.
Using the above relation to \eqref{eq_lemma_energy2}, we have
\begin{eqnarray} 
\label{eq_lemma_energy1} \int_{\Omega_d} |\nabla w|^2 d {\bf r}     
& \leq & \left( k_*^2 \|\epsilon_* \|_{L^{\infty}(\Omega_d)} + \frac{1}{2} \right) \int_{\Omega_d}  |w|^2 d {\bf r}   \\
&& + \int_{-L/2}^{L/2} | h(y) \overline{w}(y,d) | dy + \frac{1}{2} \int_{\Omega_d} | f |^2d {\bf r}.  \nonumber 
\end{eqnarray}
Applying Cauchy's inequality with any $\eps >0$ to the second term on the right-hand side, we get
\begin{equation}
\label{eq:hw}
\int_{-L/2}^{L/2} | h(y) \overline{w} (y,d)| dy  \leq \eps  \int_{-L/2}^{L/2}  |w(y,d) |^2 dy  + \frac{1}{4 \eps}  \int_{-L/2}^{L/2}  |h |^2 dy. 
\end{equation}
Using the definition of the norm in space $H^{1/2}_{\beta}(\Gamma)$, we have
$$  \int_{-L/2}^{L/2}  |w(y,d) |^2 dy \leq \| w(y,d) \|^2_{H^{1/2}_{\beta}(\Gamma)} \leq C_1 \| w \|^2_{H^1(\Omega_d)},$$
where $C_1 >0$ is a constant. The second inequality arises from the fact  that  $H^{1/2}_{\beta}(\Gamma)$ is the space of the traces on $\Gamma_d$ of all functions of $H^1_{\beta,0}(\Omega_d)$, and the trace operator is bounded in $H^1_{\beta,0}(\Omega_d)$ \cite{bonnet94,leoni}. Substituting the above inequality into \eqref{eq:hw}, we have
$$  \int_{-L/2}^{L/2} | h(y) \overline{w}(y,d) | dy \leq \eps  C_1 \| \nabla  w \|^2_{L^2(\Omega_d)}  + \eps C_1  \| w \|^2_{L^2(\Omega_d)}  + \frac{1}{4 \eps}  \| h \|^2_{L^2(\Gamma)}. $$
Inserting it into \eqref{eq_lemma_energy1} and choosing $\eps$ such that $\eps C_1 \leq 1/2$, we have
$$  \frac{1}{2}   \|\nabla w \|^2_{L^2(\Omega_d)}  \leq C_2 \| w \|^2_{L^2(\Omega_d)} + \frac{1}{4 \eps}  \| h \|^2_{L^2(\Gamma)} + \frac{1}{2}\| f \|^2_{L^2(\Omega_d)} ,$$
where $C_2 =  k_*^2 \|\epsilon_* \|_{L^{\infty}(\Omega_d)}   + 1$.  Adding  $\frac{1}{2} \| w \|^2_{L^2(\Omega_d)}$ to both sides, we have 
$$ \| w \|^2_{H^1(\Omega_d)} \leq C_3 \| w \|^2_{L^2(\Omega_d)}  + \frac{1}{2 \eps} \| h \|^2_{L^2(\Gamma)} +  \| f \|^2_{L^2(\Omega_d)},$$
where $C_3 = 2(C_2 + 1)$. This concludes the proof.  
\end{proof}

We now prove that the norm of  $h$ is bounded by  $f$ in domain $\Omega^+$.
\begin{lemma}
\label{lemma:bound_h} Let $f \in L^2(\Omega^+)$ satisfy \eqref{eq:f_cond1}, and $h$ be defined by \eqref{eq:h}. Then, there exists a constant $C$ such that
$$  \| h \|_{L^2(\Gamma)}   \leq C \| f \|_{L^2(\Omega^+)},$$
where $C$ is independent of $f$.
\end{lemma}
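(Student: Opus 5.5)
The plan is to work mode by mode in the Fourier expansion for $z>d$, where $\epsilon_*=1$. By Parseval, $\|h\|^2_{L^2(\Gamma)} = L\sum_{m\neq 0}|Q_m'(d)|^2 e^{-2\gamma_m d}$. Similarly, writing $f = \sum_{m\ne 0} f_m(z) e^{\mathsf{i}\beta_m y}$ with $f_m(z)=P_m(z)e^{-\gamma_m z}$ for $z>d$, we have $\|f\|^2_{L^2(\Omega_d^c)} = L\sum_{m\neq0}\int_d^\infty |f_m(z)|^2 dz$. It therefore suffices to prove, for each $m\neq 0$, a bound
\[
|Q_m'(d)|^2 e^{-2\gamma_m d} \le C \int_d^{\infty} |f_m(z)|^2\, dz
\]
with $C$ independent of $m$. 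Summing over $m$ and using $\|f\|_{L^2(\Omega_d^c)}\le \|f\|_{L^2(\Omega^+)}$ then gives the lemma.

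To get this bound, solve the ODE \eqref{eq:Q_m} explicitly. Set $g_m(z)=Q_m'(z)e^{-\gamma_m z}$. From $Q_m''-2\gamma_m Q_m' = P_m$ we obtain $(Q_m' e^{-2\gamma_m z})' = P_m e^{-2\gamma_m z}$. The polynomial solution selected by the decay condition in \eqref{eq:Q_m} has $Q_m' e^{-2\gamma_m z}\to 0$ as $z\to\infty$, so integrating from $d$ to $\infty$ gives
\[
Q_m'(d)e^{-2\gamma_m d} = -\int_d^\infty P_m(z)e^{-2\gamma_m z}dz ,
\]
that is,
\[
Q_m'(d)e^{-\gamma_m d} = -\int_d^\infty f_m(z)e^{-\gamma_m (z-d)}dz .
\]
This matches the integration-by-parts identity already used in the proof of Theorem~\ref{theorem:existenceBVP2}. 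Cauchy--Schwarz then yields
\[
|Q_m'(d)|^2 e^{-2\gamma_m d} \le \int_d^\infty |f_m|^2dz \cdot \int_d^\infty e^{-2\gamma_m(z-d)}dz = \frac{1}{2\gamma_m}\int_d^\infty |f_m|^2 dz .
\]
Since $\gamma_m\ge\gamma_\dagger>0$ for all $m\neq0$ by \eqref{eq:cond_k_beta}, the constant $C=1/(2\gamma_\dagger)$ is uniform in $m$, and summing gives $\|h\|^2_{L^2(\Gamma)}\le \frac{1}{2\gamma_\dagger}\|f\|^2_{L^2(\Omega^+)}$.

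The main point needing care is justifying that the solution $Q_m$ of \eqref{eq:Q_m} indeed satisfies $Q_m'(z)e^{-2\gamma_m z}\to0$, so that the boundary term at infinity vanishes. Because $Q_m$ is a polynomial (the paper chooses the polynomial particular solution, the homogeneous part $e^{2\gamma_m z}$ being excluded by the decay condition), $Q_m'$ is a polynomial and this holds trivially. A secondary issue is interchanging summation with integration and convergence of the series. Here $f\in L^2(\Omega^+)$ makes the Fourier coefficients $f_m\in L^2(d,\infty)$ with summable norms by Parseval, so the termwise bound suffices and the series defining $h$ converges in $L^2(\Gamma)$.
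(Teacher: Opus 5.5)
Your proof is correct. It shares the paper's outer structure (Parseval in $y$, a per-mode bound that is uniform in $m$ via $\gamma_m\ge\gamma_\dagger$, then summation with $\|f\|_{L^2(\Omega_d^c)}\le\|f\|_{L^2(\Omega^+)}$), but you get the per-mode bound by a different and more direct argument.

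The paper uses an energy identity. It multiplies \eqref{eq:Q_m} by $\overline{Q}'_m e^{-2\gamma_m z}$ and integrates by parts to get $|Q'_m(d)|^2e^{-2\gamma_m d}+2\gamma_m\int_d^{\infty}|Q'_m|^2e^{-2\gamma_m z}dz=-2\,\mbox{Re}\int_d^{\infty}\overline{Q}'_mP_me^{-2\gamma_m z}dz$. It then uses the nonnegative second term to bound the weighted $L^2$ norm of $Q'_m$ by that of $P_m$, and feeds this back through H\"older to reach the constant $2/\gamma_\dagger$.

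You instead notice that the ODE is an exact derivative, $(Q'_me^{-2\gamma_m z})'=P_me^{-2\gamma_m z}$. This gives the explicit representation $Q'_m(d)e^{-\gamma_m d}=-\int_d^{\infty}f_m(z)e^{-\gamma_m(z-d)}dz$, which is the same identity the paper uses in proving Theorem~\ref{theorem:existenceBVP2}. One Cauchy--Schwarz step then gives the sharper constant $1/(2\gamma_\dagger)$. The formula also makes it evident that $h$ depends only on $f$, not on the free additive constant in $Q_m$.

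The paper's route buys stylistic uniformity with the other energy estimates in that section (Lemmas~\ref{lemma:energy} and \ref{lemma:bound_outside}), plus a bound on $\int_d^{\infty}|Q'_m|^2e^{-2\gamma_m z}dz$ as a by-product. Neither is needed for this lemma, so your version is the cleaner proof here.
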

\begin{proof} 
In the definition of $h$, the polynomial $Q_m(z)$ (for $m \neq 0$) satisfies \eqref{eq:Q_m}.  Multiplying \eqref{eq:Q_m} by $\overline{Q}'_m(z) e^{- 2\gamma_m z}$, integrating from $d$ to $+\infty$, and using integration by parts, we have
$$ |Q'_m(d)|^2 e^{-2 \gamma_m d} = - \int_d^{+\infty} Q'_m \overline{Q}''_m e^{-2 \gamma_m z} dz - \int_d^{+\infty} \overline{Q}'_m P_m e^{-2 \gamma_m z} dz.  $$
Using \eqref{eq:Q_m} again to eliminate $\overline{Q}''_m$ from the above, we have
$$ |Q'_m(d)|^2 e^{-2 \gamma_m d}  + 2 \gamma_m \int_d^{+\infty} |Q'_m|^2  e^{-2 \gamma_m z} dz = - \int_d^{+\infty} \left( \overline{Q}'_m P_m + Q'_m \overline{P}_m \right) e^{-2 \gamma_m z} dz. $$
All terms in the left-hand side are positive, thus
\begin{equation}
\label{eq:lemma_bound_h_eq1}   |Q'_m(d)|^2 e^{-2 \gamma_m d}  \leq    2\int_d^{+\infty} \left| {Q}'_m P_m  \right| e^{-2 \gamma_m z} dz,
   \end{equation}
and 
\begin{eqnarray} 
\label{eq:lemma_bound_h_eq11}
\int_d^{+\infty} |Q'_m|^2  e^{-2 \gamma_m z} dz 
&\leq & \frac{1}{ \gamma_{\dagger}}   \int_d^{+\infty} \left| {Q}'_m P_m  \right| e^{-2 \gamma_m z} dz,  
\end{eqnarray}
where  $\gamma_m \geq \gamma_{\dagger} > 0$ for all $m\neq 0$ as mentioned in Section \ref{sec:propertiesBIC}. Using H\"{o}lder's inequality to the right-hand side of \eqref{eq:lemma_bound_h_eq11}, we have 
$$  \left( \int_d^{+\infty} |Q'_m|^2  e^{-2 \gamma_m z} dz \right)^{1/2} \leq  \frac{1}{\gamma_{\dagger}} \left( \int_d^{+\infty} |P_m|^2  e^{-2 \gamma_m z} dz \right)^{1/2}. $$
Substituting the above into the inequality  \eqref{eq:lemma_bound_h_eq1} yields
\begin{eqnarray}
\label{eq:lemma_bound_h_eq2}  |Q'_m(d)|^2 e^{-2 \gamma_m d} &\leq &  2 \left( \int_d^{+\infty} |P_m|^2  e^{-2 \gamma_m z} dz \right)^{1/2} \left( \int_d^{+\infty} |Q'_m|^2  e^{-2 \gamma_m z} dz \right)^{1/2}  \\
& \leq & \frac{2}{\gamma_{\dagger}} \int_d^{+\infty} |P_m|^2  e^{-2 \gamma_m z} dz, \nonumber
  \end{eqnarray}
for $m \neq 0.$

Note that
\begin{equation*}
\label{eq:lemma_bound_h_eq4} \| h \|^2_{L^2(\Gamma)} = \int_{-L/2}^{L/2} |h|^2 dy = L \sum\limits_{m\neq 0} |Q'_m(d)|^2 e^{-2 \gamma_m d},
 \end{equation*}
 and
\begin{equation*}
\label{eq:lemma_bound_h_eq3} \| f \|^2_{L^2(\Omega^+)} \geq \| f \|^2_{L^2(\Omega^c_d)}  = L \sum\limits_{m\neq 0} \int_{d}^{+\infty} |P_m|^2 e^{-2 \gamma_m z} dz.
 \end{equation*}
Comparing the right-hand sides of these relations for each $m \neq 0$,   \eqref{eq:lemma_bound_h_eq2} leads to
$$ \| h \|^2_{L^2(\Gamma)} \leq \frac{2}{\gamma_{\dagger}} \| f \|^2_{L^2(\Omega^+)}. $$
This concludes the proof. 
\end{proof}

Next, we prove that the solution of BVP \eqref{eq:BVP2} that is locally orthogonal to the BIC $u_*$ in domain $\Omega_d$ is bounded by the right-hand side inhomogeneous terms.
\begin{lemma}
\label{lemma:bound_inside}  Let $f \in L^2(\Omega^+)$  satisfy  \eqref{eq:f_cond1} and \eqref{eq:f_cond2}, and  $h$ be defined by \eqref{eq:h}.  Then, there exists a constant $C$ such that 
$$ \|  w \|_{L^2(\Omega_d)} \leq C \left[ \| f \|_{L^2(\Omega_d)} + \| h \|_{L^2(\Gamma)}  \right] $$
where $w$ is the solution of  BVP \eqref{eq:BVP2} that satisfies the orthogonal  condition 
$$  (w, u_*)_{L^2(\Omega_d)} = \int_{\Omega_d}  w \overline{u}_* d \mathbf{r} = 0, $$
and $u_*$ is the  BIC. The constant $C$ is independent of $f$ and $h$.
\end{lemma}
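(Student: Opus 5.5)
I will argue by contradiction, combining the standard compactness argument with the a priori estimate of Lemma~\ref{lemma:energy}. Suppose no such constant $C$ exists. Then there are sequences $f_n \in L^2(\Omega^+)$ satisfying \eqref{eq:f_cond1} and \eqref{eq:f_cond2}, the associated $h_n$ defined by \eqref{eq:h}, and solutions $w_n$ of BVP \eqref{eq:BVP2} with $(w_n,u_*)_{L^2(\Omega_d)}=0$, such that
\[
\|w_n\|_{L^2(\Omega_d)} > n\left[\|f_n\|_{L^2(\Omega_d)}+\|h_n\|_{L^2(\Gamma)}\right].
\]
By linearity I normalize so that $\|w_n\|_{L^2(\Omega_d)}=1$. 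Then $\|f_n\|_{L^2(\Omega_d)}+\|h_n\|_{L^2(\Gamma)}<1/n \to 0$.

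By Lemma~\ref{lemma:energy}, $\|w_n\|_{H^1(\Omega_d)}\le C(1+1/n)$, so $\{w_n\}$ is bounded in $H^1_{\beta,0}(\Omega_d)$. The space $H^1_{\beta,0}(\Omega_d)$ is a closed subspace, hence weakly closed. I extract a subsequence with $w_n \rightharpoonup w$ weakly in $H^1_{\beta,0}(\Omega_d)$. By Rellich's theorem on the bounded Lipschitz domain $\Omega_d$, $w_n \to w$ strongly in $L^2(\Omega_d)$. In addition, by compactness of the trace $H^1(\Omega_d)\to L^2(\Gamma_d)$ (or simply weak continuity of the trace into $H^{1/2}_\beta(\Gamma)$), the traces converge weakly in $H^{1/2}_\beta(\Gamma)$. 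Consequently, $\|w\|_{L^2(\Omega_d)}=1$ and $(w,u_*)_{L^2(\Omega_d)}=0$.

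Next I pass to the limit in the variational formulation \eqref{eq:variation}. For fixed $v\in H^1_{\beta,0}(\Omega_d)$, the form $\mathbf{a}(w_n,v)$ is a bounded linear functional of $w_n$, so it converges to $\mathbf{a}(w,v)$ under weak convergence. This includes the term $\langle \mathcal{T}w_n(\cdot,d),v(\cdot,d)\rangle$, since $\mathcal{T}$ maps $H^{1/2}_\beta$ boundedly into $H^{-1/2}_\beta$ and the trace is weakly continuous. Meanwhile
\[
|\mathbf{l}_n(v)| \le \|h_n\|_{L^2(\Gamma)}\|v(\cdot,d)\|_{L^2(\Gamma)}+\|f_n\|_{L^2(\Omega_d)}\|v\|_{L^2(\Omega_d)} \to 0 .
\]
Hence $\mathbf{a}(w,v)=0$ for all $v$, i.e. $w$ solves the homogeneous problem \eqref{eq:operator_homo}. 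Since the BIC is nondegenerate, that solution space is spanned by $u_*|_{\Omega_d}$, so $w=\eta u_*$. Orthogonality then gives $\eta\|u_*\|^2_{L^2(\Omega_d)}=0$. Because $u_*\not\equiv 0$ on $\Omega_d$ (otherwise $u_*$ would vanish identically by unique continuation, or through its Fourier expansion for $z>d$), we get $\eta=0$ and so $w=0$. This contradicts $\|w\|_{L^2(\Omega_d)}=1$.

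The main point needing care is the use of Lemma~\ref{lemma:energy}. Its proof relied on the zeroth Fourier coefficient of $w(\cdot,d)$ vanishing, so that $\mathrm{Re}\langle \mathcal{T}w,w\rangle\le 0$. In general, however, $\mathrm{Re}\langle\mathcal{T}w,w\rangle = -L\sum_{m\ne0}\gamma_m|w_m|^2\le 0$ holds for any $w$, because the $m=0$ term $\mathsf{i}\alpha_0|w_0|^2$ is purely imaginary. I will therefore take real parts in the energy identity, which makes the lemma applicable to every solution $w_n$ without assuming \eqref{eq:f_cond3}. The remaining steps, namely weak continuity of the boundary term and identification of the limit with the kernel, are routine.
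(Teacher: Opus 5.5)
Your proof is correct and follows the same route as the paper: argue by contradiction with the normalization $\|w_n\|_{L^2(\Omega_d)}=1$, get the $H^1$ bound from Lemma~\ref{lemma:energy}, use weak/Rellich compactness, identify the limit as a multiple of $u_*$ by nondegeneracy, and reach a contradiction through orthogonality. Your observation that taking real parts in the energy identity gives $\mathrm{Re}\langle \mathcal{T}w,w\rangle=-L\sum_{m\neq 0}\gamma_m|w_m|^2\le 0$ for every $w$ is a genuine repair: the paper's proof of Lemma~\ref{lemma:energy} obtains $w_0=0$ from Theorem~\ref{theorem:decay}, which requires \eqref{eq:f_cond3}, and that condition is not among the hypotheses of this lemma.
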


The proof follows the proof of Theorem 6 on page 326 of {\cite{evens}}. 
\begin{proof}
If the result is not true, then there  exist sequences $\left\{ f_n \right\}_{n=1}^{\infty} \in L^2(\Omega^+)$ that satisfies \eqref{eq:f_cond1},  related $\left\{ h_n \right\}_{n=1}^{\infty} \in L^2(\Gamma)$,  and $\left\{ w_n \right\}_{n=1}^{\infty} \in H^{1}_{\beta, 0}(\Omega_d)$ that satisfies $(w_n, u_*)_{L^2(\Omega_d)} = 0$,  such that
\begin{eqnarray}
  \quad  \left\{ \begin{array}{ll} \Delta w_n  + k^2_* \epsilon_*  w_n = f_n(y,z),  &  (y,z) \in \Omega_{d}, \\
                          w_n(L/2,z) = e^{ \mathsf{ i } \beta_* L} w_n(-L/2,z), & 0 < z < z, \\ 
                           \frac{\partial w_n}{\partial y}(L/2,z) =  e^{ \mathsf{ i } \beta_* L} \frac{\partial w_n}{\partial y}(-L/2,z), & 0 < z < d, \\
                          \frac{\partial w_n}{\partial z} = 0,  & (y,z) \in \Gamma_0, \\
                          \frac{\partial w_n}{\partial z} = \mathcal{T} w_n + h_n(y), & (y,z) \in \Gamma_d, \end{array} \right.
\end{eqnarray}
in the weak sense, but
$$  \| w_n \|_{L^2(\Omega_d)} \geq n  \left[ \| f_n \|_{L^2(\Omega_d)} + \| h_n \|_{L^2(\Gamma)}  \right], \quad \forall n \geq 1. $$

Without loss of generality, we may assume $\|  w_n \|_{L^2(\Omega_d)} = 1$, then $f_n \to 0$ in $L^2(\Omega_d)$ and $h_n \to 0$ in $L^2(\Gamma)$ as $n \to \infty$. According to Lemma {\ref{lemma:energy}},  sequence $\left\{ w_n \right\}_{n=1}^{\infty} $ is bounded in $H^1_{\beta,0}(\Omega_d)$.  Therefore, there exists a subsequence of $\left\{ w_{n_j} \right\}_{j=1}^{\infty} \subset \left\{ w_n \right\}_{n=1}^{\infty}$ such that $w_{n_j}$ weakly converges to $  w  $ in $ H^1_{\beta, 0}(\Omega_d)$  and  $w_{n_j}$ converges to $  w  $ in  $L^2(\Omega_d)$  as $j \to \infty$ \cite{evens,adams}.
Then $w$ is a solution of BVP \eqref{eq:BVP2} with $f \equiv 0$ and $h \equiv 0$, and satisfies $\| w \|_{L^2(\Omega_d)} = 1$ and $(w, u_*)_{L^2(\Omega_d)} = 0$.

Note that BVP \eqref{eq:BVP2} with $f \equiv 0$ and $h \equiv 0$ has only one non-trivial eigen-solution $u_*$. Thus, $w = C_2 u_*$, where $C_2$ is a constant. Due to condition $(w, u_*)_{L^2(\Omega_d)}    = 0$, we have $C_2 = 0$. This implies that $w \equiv 0$, which is in contradiction to $\| w \|_{L^2(\Omega_d)} = 1$. 
\end{proof}

\noindent {\bf Remark}: Lemmas \ref{lemma:energy}, \ref{lemma:bound_h} and \ref{lemma:bound_inside} imply that there exists a constant $C$ such that 
$$ \|  w \|_{H^1(\Omega_d)} \leq C  \| f \|_{L^2(\Omega^+)}, $$
where $w$ is the solution of  BVP \eqref{eq:BVP2} that satisfies the orthogonal  condition 
$$  (w, u_*)_{L^2(\Omega_d)} = 0.$$

If $w$ is the solution of BVP \eqref{eq:BVP2}  orthogonal to BIC $u_*$ in domain $\Omega_d$, i.e., $w$ satisfies Lemma \ref{lemma:bound_inside},  we  extend $w$ to a solution of BVP \eqref{eq:BVP1} defined on $\Omega^+$. In the following, we show that  the extended solution is bounded  by $f$ in $\Omega_d^c$.
\begin{lemma}
\label{lemma:bound_outside}  
Let $f \in L^2(\Omega^+)$  satisfy  \eqref{eq:f_cond1} -- \eqref{eq:f_cond3}, and  $h$ be defined by \eqref{eq:h}.  Let $w$ be a solution of BVP \eqref{eq:BVP1} whose restriction to $\Omega_d$ satisfies the condition of Lemma \ref{lemma:bound_inside}.  Then there exists a constant $C$ such that
$$ \| w \|_{H^{1}(\Omega^c_d)} \leq C \| f \|_{L^2(\Omega^+)}, $$ 
where  $C$ is independent of $f$.
\end{lemma}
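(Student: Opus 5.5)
We work mode by mode in the Fourier expansion of $w$ on $\Omega_d^c$. By Theorem~\ref{theorem:decay} and the extension formula \eqref{eq:w_extend}, the solution satisfies $q_0=0$, so for $z>d$ we have $w=\sum_{m\ne0}Q_m(z)e^{\mathsf{i}\beta_m y-\gamma_m z}$. Here $Q_m$ solves \eqref{eq:Q_m} with $Q_m(d)=e^{\gamma_m d}w_m$, where $w_m$ is the $m$-th Fourier coefficient of $w(y,d)$. Write $g_m(z)=Q_m(z)e^{-\gamma_m z}$. By Parseval,
\[
\|w\|^2_{H^1(\Omega_d^c)}=L\sum_{m\ne0}\int_d^{\infty}\bigl[(1+\beta_m^2)|g_m|^2+|g_m'|^2\bigr]dz ,
\]
and $g_m'=(Q_m'-\gamma_m Q_m)e^{-\gamma_m z}$. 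So the task reduces to bounding, for each $m$, the weighted integrals of $|Q_m|^2$ and $|Q_m'|^2$ against $e^{-2\gamma_m z}$, with constants uniform in $m$ after the weight $1+\beta_m^2\sim\gamma_m^2$ is included.

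For the derivative, the computation in the proof of Lemma~\ref{lemma:bound_h} already gives
\[
\int_d^\infty|Q_m'|^2e^{-2\gamma_m z}dz\le \gamma_m^{-2}\int_d^\infty|P_m|^2e^{-2\gamma_m z}dz .
\]
Here we sharpen $\gamma_\dagger$ to $\gamma_m$, which the same argument allows since $2\gamma_m\int|Q_m'|^2e^{-2\gamma_m z}\le 2\int|Q_m'P_m|e^{-2\gamma_m z}$. For $Q_m$ itself, write
\[
Q_m(z)e^{-\gamma_m z}=Q_m(d)e^{-\gamma_m d}e^{-\gamma_m(z-d)}+e^{-\gamma_m z}\int_d^z Q_m'(t)\,dt ,
\]
and apply a weighted Hardy-type inequality to the integral term. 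Using $e^{-\gamma_m z}\le e^{-\gamma_m t}e^{-\gamma_m(z-t)}$ and Young's convolution inequality with the kernel $e^{-\gamma_m s}$, whose $L^1$ norm is $1/\gamma_m$, we get
\[
\Bigl\|e^{-\gamma_m z}\int_d^zQ_m'\Bigr\|_{L^2(d,\infty)}\le\gamma_m^{-1}\|Q_m'e^{-\gamma_m z}\|_{L^2(d,\infty)} .
\]
The boundary term contributes $|w_m|^2/(2\gamma_m)$ to $\int|g_m|^2$. Multiplying by $1+\beta_m^2\le C\gamma_m^2$, which holds uniformly because $\gamma_m\ge\gamma_\dagger>0$ and $\gamma_m^2=\beta_m^2-k_*^2$, we obtain
\[
\int_d^\infty(1+\beta_m^2)|g_m|^2+|g_m'|^2\,dz\le C\Bigl[(1+\beta_m^2)^{1/2}|w_m|^2+\gamma_m^{-2}\int_d^\infty|P_m|^2e^{-2\gamma_m z}dz\Bigr].
\]
Summing over $m$ gives
\[
\|w\|^2_{H^1(\Omega_d^c)}\le C\bigl(\|w(\cdot,d)\|^2_{H^{1/2}_\beta(\Gamma)}+\|f\|^2_{L^2(\Omega_d^c)}\bigr).
\]

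We then close the estimate with the trace theorem, $\|w(\cdot,d)\|_{H^{1/2}_\beta(\Gamma)}\le C\|w\|_{H^1(\Omega_d)}$, as used in Lemma~\ref{lemma:energy}. This is combined with the Remark following Lemma~\ref{lemma:bound_inside}, which gives $\|w\|_{H^1(\Omega_d)}\le C\|f\|_{L^2(\Omega^+)}$ via Lemmas~\ref{lemma:energy}, \ref{lemma:bound_h} and~\ref{lemma:bound_inside}. Together these yield $\|w\|_{H^1(\Omega_d^c)}\le C\|f\|_{L^2(\Omega^+)}$.

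The main obstacle is keeping the constants uniform in $m$. The gradient term in $y$ carries a factor $\beta_m^2$, so the bounds on $\int|Q_m|^2e^{-2\gamma_m z}$ must gain a factor $\gamma_m^{-2}$, not merely $\gamma_\dagger^{-2}$. The boundary contribution must appear with weight $\gamma_m$, which is exactly what $H^{1/2}$ controls. If the Hardy/Young step above turns out to lose a power, I would instead derive the energy identity directly: multiply $g_m''-\gamma_m^2g_m=P_me^{-\gamma_m z}$ by $\overline g_m$ and integrate from $d$ to $\infty$. This gives
\[
\int\bigl(|g_m'|^2+\gamma_m^2|g_m|^2\bigr)=-g_m'(d)\overline g_m(d)-\int P_me^{-\gamma_m z}\overline g_m ,
\]
where $g_m'(d)=(Q_m'(d)-\gamma_mQ_m(d))e^{-\gamma_m d}$. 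Then bound $|Q_m'(d)|e^{-\gamma_m d}$ by \eqref{eq:lemma_bound_h_eq2} and absorb the remaining terms using Cauchy's inequality.
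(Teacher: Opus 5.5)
Your argument is correct, but it follows a different route from the paper's. The paper also works mode by mode, but only through the energy identity $\int_d^{\infty}(|w_m'|^2+\gamma_m^2|w_m|^2)\,dz=-\overline{w}_m(d)w_m'(d)-\int_d^{\infty}\overline{w}_m f_m\,dz$, which is the identity your fallback starts from. The paper then proceeds in four steps:
\begin{enumerate}
\item Summing over $m$, it bounds $\|w\|^2_{L^2(\Omega_d^c)}$ by the flux $\int_{-L/2}^{L/2}\overline{w}(y,d)\,\partial_z w(y,d)\,dy$ plus $\int_{\Omega_d^c}\overline{w}f\,d\mathbf{r}$.
\item It controls the flux through Green's identity on $\Omega_d$, i.e.\ by $\|w\|^2_{H^1(\Omega_d)}+\|f\|^2_{L^2(\Omega_d)}$, and then applies the Remark after Lemma~\ref{lemma:bound_inside}.
\item It absorbs the term $\int_{\Omega_d^c}\overline{w}f\,d\mathbf{r}$ with Cauchy's inequality.
\item It recovers $\|\nabla w\|_{L^2(\Omega_d^c)}$ from the energy identity on $\Omega_d^c$.
\end{enumerate}

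You instead use the explicit representation $g_m(z)=w_m e^{-\gamma_m(z-d)}+\int_d^z e^{-\gamma_m(z-t)}Q_m'(t)e^{-\gamma_m t}\,dt$ together with Young's inequality. This gives an exterior estimate in terms of the Dirichlet trace alone, $\|w\|^2_{H^1(\Omega_d^c)}\le C(\|w(\cdot,d)\|^2_{H^{1/2}_\beta(\Gamma)}+\|f\|^2_{L^2(\Omega_d^c)})$. You close it with the trace inequality already used in Lemma~\ref{lemma:energy}.

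Your version is more explicit. The $m$-dependence of every constant is visible, the $\beta_m^2$ weight carried by $\partial_y w$ is handled transparently, and the flux term never has to be estimated. The paper's version is shorter and needs nothing beyond Green's identities and Cauchy's inequality.

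A minor point: sharpening $\gamma_\dagger$ to $\gamma_m$ in the $Q_m'$ bound is harmless but unnecessary. Young's inequality already supplies the factor $\gamma_m^{-1}$ that offsets the weight $1+\beta_m^2\le C\gamma_m^2$. So the bound $\|Q_m'e^{-\gamma_m z}\|_{L^2(d,\infty)}\le\gamma_\dagger^{-1}\|P_m e^{-\gamma_m z}\|_{L^2(d,\infty)}$ from the proof of Lemma~\ref{lemma:bound_h} suffices.
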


\begin{proof}  
According to \eqref{eq:w_fourier2}, for $z >d$,  $w$ can be expanded as
$$ w = \sum\limits_{m\neq 0} w_m(z) e^{ \mathsf{ i } \beta_m y}, $$
where $w_m(z) = Q_m(z) e^{-\gamma_m z}$ satisfies
$$ w''_m(z) - \gamma_m^2 w_m(z) = f_m(z),$$
with $f_m(z) = P_m(x) e^{-\gamma_m z}$.

For each $m \neq 0$, multiplying the equation of $w_m$ by $\overline{w}_m$ and integrating from $d$ to $+\infty$, we have
$$ \int_{d}^{+\infty} |w'_m|^2 dz + \gamma_m^2 \int_d^{+\infty} |w_m|^2 dz = - \overline{w}_m(d) w'_m(d) - \int_d^{+\infty} \overline{w}_m f_m dz. $$
Since both terms  on the left-hand side are positive, the right-hand side must also be positive. Thus,
\begin{eqnarray} 
\label{eq_lemma_bound2}\int_d^{+\infty} |w_m|^2 dz & \leq & - \frac{1}{\gamma_m} \left[  \overline{w}_m(d) w'_m(d)    +  \int_d^{+\infty} \overline{w}_m f_m dz   \right]   \\
& \leq & - \frac{1}{\gamma_{\dagger}} \left[   \overline{w}_m(d) w'_m(d)    +   \int_d^{+\infty} \overline{w}_m f_m dz   \right]. \nonumber
\end{eqnarray}
Note that 
$$ \| w \|^2_{L^2(\Omega^c_d)}  = L \sum\limits_{m\neq 0} \int_d^{+\infty} |w_m|^2 dz,  $$
$$\int_{-L/2}^{L/2} \overline{w}(y,d) \frac{\partial w}{\partial z}(y,d) dy = L \sum\limits_{m\neq 0} \overline{w}_m(d) w'_m(d),  $$
and
$$ \int_{\Omega^c_d} \overline{w} f d {\bf r} = L \sum\limits_{m\neq 0}  \int_{d}^{+\infty} \overline{w}_m f_m dz. $$
Using these relations, inequality \eqref{eq_lemma_bound2} leads to
\begin{eqnarray}
\label{eq_thm5} \| w \|^2_{L^2(\Omega^c_d)} & \leq  & -  \frac{1}{\gamma_{\dagger}} \left[ \int_{-L/2}^{L/2} \overline{w}(y,d) \frac{\partial w}{\partial z}(y,d) dy   +  \int_{\Omega^c_d} \overline{w} f d {\bf r}  \right]   \\
 & \leq &  \frac{1}{\gamma_{\dagger}} \left[  \left|\int_{-L/2}^{L/2} \overline{w}(y,d) \frac{\partial w}{\partial z}(y,d) dy \right|  + \left| \int_{\Omega^c_d} \overline{w} f d {\bf r} \right| \right]. \nonumber
\end{eqnarray}

Since $w$ is a solution of BVP \eqref{eq:BVP1}, multiplying the governing equation of $w$ by $\overline{w}$ and integrating on $\Omega_d$, we have
$$\int_{-L/2}^{L/2} \overline{w}(y,d) \frac{\partial w}{\partial z}(y,d) dy = \int_{\Omega_d} |\nabla w|^2 d {\bf r} - k_*^2 \int_{\Omega_d}  \epsilon_*  |w|^2 d {\bf r} + \int_{\Omega_d} \overline{w} f d {\bf r}. $$
Thus 
\begin{eqnarray*}
\left| \int_{-L/2}^{L/2} \overline{w}(y,d) \frac{\partial w}{\partial z}(y,d) dy \right| & \leq &  \int_{\Omega_d} |\nabla w|^2 d {\bf r} + k_*^2 \int_{\Omega_d}  \epsilon_*  |w|^2 d {\bf r} + \int_{\Omega_d} | \overline{w} f | d {\bf r} \\
& \leq &  \int_{\Omega_d} |\nabla w|^2 d {\bf r} + k_*^2 \| \epsilon_* \|_{L^{\infty}(\Omega_d)} \int_{\Omega_d}   |w|^2 d {\bf r} \\
&+& \frac{1}{2} \int_{\Omega_d} | w|^2 d {\bf r} + \frac{1}{2} \int_{\Omega_d} | f|^2 d {\bf r} \\
& \leq & C_1 \| w \|^2_{H^{1}(\Omega_d)}  + \frac{1}{2} \| f \|^2_{L^2(\Omega_d)},
\end{eqnarray*}
where $C_1 = \mbox{max}\left\{ 1, k_*^2 \| \epsilon_* \|_{L^{\infty}(\Omega_d)}  + \displaystyle\frac{1}{2} \right\}$.
Furthermore, by the remark of Lemma \ref{lemma:bound_inside}, and noting $\Omega_d \subset \Omega^+$, we have
\begin{equation}
\label{eq:bound_w_wz}
\left| \int_{-L/2}^{L/2} \overline{w}(y,d) \frac{\partial w}{\partial z}(y,d) dy \right|   \leq C_2  \| f \|^2_{L^2(\Omega^+)} ,
\end{equation}
where $C_2$ is a constant.  Inserting the above into  \eqref{eq_thm5},  we have
\begin{equation}
\label{eq_bound1} \| w \|^2_{L^2(\Omega^c_d)}   \leq \frac{C_2}{\gamma_{\dagger}}  \| f \|^2_{L^2(\Omega^+)}  + \frac{1}{\gamma_{\dagger}} \left| \int_{\Omega^c_d} \overline{w} f d {\bf r} \right|.  
 \end{equation}

Applying Cauchy's inequality with $\eps > 0$ to the last term of \eqref{eq_bound1}, we have
$$ \left| \int_{\Omega^c_d} \overline{w} f d {\bf r} \right| \leq \int_{\Omega^c_d}  | \overline{w} f | d {\bf r} \leq \eps \| w \|^2_{L^2(\Omega^c_d)} + \frac{1}{4 \eps} \| f \|^2_{L^2(\Omega^c_d)}.$$
Inserting the above into \eqref{eq_bound1} and  choosing $\eps > 0$ such that $ \eps/\gamma_{\dagger} < 1/2 $, we obtain 
\begin{equation}
\label{eq:bound_w_out}    
\frac{1}{2} \| w \|^2_{L^2(\Omega^c_d)}   \leq  \frac{C_2}{\gamma_{\dagger}}  \| f \|^2_{L^2(\Omega^+)}  + \frac{1}{4 \eps \gamma_{\dagger}} \| f \|^2_{L^2(\Omega^c_d)} \leq C_3 \| f \|^2_{L^2(\Omega^+)} , 
\end{equation}
where $C_3 =  \left(C_2 + \displaystyle\frac{1}{4 \varepsilon} \right)/ \gamma_{\dagger}$.  

Multiplying the governing equation of $w$ by $\overline{w}$ and integrating on $\Omega^c_d$, we have
$$\int_{\Omega^c_d} |\nabla w|^2 d {\bf r}  =   k_*^2 \int_{\Omega^c_d}   |w|^2 d {\bf r} - \int_{-L/2}^{L/2} \overline{w}(y,d) \frac{\partial w}{\partial z}(y,d) dy - \int_{\Omega^c_d} \overline{w} f d {\bf r}. $$
Using \eqref{eq:bound_w_out}, \eqref{eq:bound_w_wz}, and Cauchy's inequality to the above equation, we have
$$ \| \nabla w \|_{L^2(\Omega^c_d)} \leq C_4 \| f \|_{L^2(\Omega^+)},$$
where $C_4$ is a constant.
Combining the above equation with \eqref{eq:bound_w_out}, we have
$$ \|  w \|_{H^1(\Omega^c_d)} \leq C_5 \| f \|_{L^2(\Omega^+)}, $$
where $C_5 = 2C_3 + C_4$. This concludes the proof.
\end{proof}

Lemmas \ref{lemma:bound_inside} and \ref{lemma:bound_outside} indicate that the solution of BVP \eqref{eq:BVP1} that is locally orthogonal to BIC $u_*$ in $\Omega_d$  is bounded by the inhomogeneous term $f$ in $\Omega_d$ and $\Omega^c_d$, respectively. Consequently, this solution is bounded by $f$ throughout the entire domain $\Omega^+$. We now prove that the solution of  BVP \eqref{eq:BVP1}, orthogonal to BIC $u_*$ in $\Omega^+$, is also  bounded by $f$.

\begin{theorem}
\label{theorem:bound_orth} Let $f \in L^2(\Omega^+)$ satisfy  \eqref{eq:f_cond1} - \eqref{eq:f_cond3}. Then, there exists a constant $C$ such that 
$$ \| w \|_{H^1(\Omega^+)} \leq C \| f \|_{L^2(\Omega^+)}$$
where $w$ is the solution of BVP \eqref{eq:BVP1} that satisfies
$$ (w,u_*)_{L^2(\Omega^+)}  = \int_{\Omega^+} w \overline{u}_* d {\bf r} = 0, $$
where $u_*$ is the  BIC, and $C$ is independent of $f$.
\end{theorem}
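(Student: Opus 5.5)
The plan is to start from the solution that Lemmas \ref{lemma:bound_inside} and \ref{lemma:bound_outside} already control, and then shift it by a multiple of the BIC. By Theorem \ref{theorem:decay}, BVP \eqref{eq:BVP1} has a decaying solution. By the Remark after Theorem \ref{theorem:existenceBVP2}, every solution has the form $w_0+\eta u_*$. So we may pick $w_1$ to be the solution whose restriction to $\Omega_d$ satisfies $(w_1,u_*)_{L^2(\Omega_d)}=0$; this is possible because $u_*\not\equiv 0$ on $\Omega_d$, by unique continuation or simply because a BIC vanishing on $\Omega_d$ would vanish identically. Combining the Remark after Lemma \ref{lemma:bound_inside} with Lemma \ref{lemma:bound_outside} gives
\[
\|w_1\|_{H^1(\Omega^+)}\le \|w_1\|_{H^1(\Omega_d)}+\|w_1\|_{H^1(\Omega_d^c)}\le C_1\|f\|_{L^2(\Omega^+)} .
\]

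The solution orthogonal to $u_*$ on all of $\Omega^+$ is then $w=w_1+\eta u_*$ with
\[
\eta=-\frac{(w_1,u_*)_{L^2(\Omega^+)}}{\|u_*\|^2_{L^2(\Omega^+)}} .
\]
This is well defined because $u_*$ decays exponentially by \eqref{eq:BIC_fourier}, so $u_*\in H^1(\Omega^+)$ and $0<\|u_*\|_{L^2(\Omega^+)}<\infty$. By Cauchy--Schwarz, $|\eta|\le \|w_1\|_{L^2(\Omega^+)}/\|u_*\|_{L^2(\Omega^+)}$. Therefore
\[
\|w\|_{H^1(\Omega^+)}\le \|w_1\|_{H^1(\Omega^+)}\Bigl(1+\frac{\|u_*\|_{H^1(\Omega^+)}}{\|u_*\|_{L^2(\Omega^+)}}\Bigr)\le C\|f\|_{L^2(\Omega^+)} ,
\]
and the constant depends only on $u_*$ and on the constants of the earlier lemmas.

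The orthogonal solution is unique. If two solutions were both orthogonal to $u_*$, their difference would be a multiple $\eta u_*$ with $\eta\|u_*\|^2=0$, so $\eta=0$. Hence the bound applies to \emph{the} solution named in the statement.

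The only delicate points are two facts that are essentially already provided. The first is that $u_*$ has finite $H^1(\Omega^+)$ norm, which follows from its exponentially decaying Fourier expansion for $z>d$. The second is that the shifted function $w_1$ is the one to which Lemma \ref{lemma:bound_outside} applies. That lemma needs $f$ to satisfy \eqref{eq:f_cond3} so that $q_0=0$ and $w_1$ has no propagating component; this also guarantees $w_1\in L^2(\Omega^+)$, so that the inner product defining $\eta$ makes sense. The remaining steps are the triangle inequality.
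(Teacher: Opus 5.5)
Your proposal is correct and is essentially the paper's proof. You take the solution whose restriction is orthogonal to $u_*$ on $\Omega_d$, bound it on $\Omega_d$ and $\Omega_d^c$ via the Remark after Lemma~\ref{lemma:bound_inside} and Lemma~\ref{lemma:bound_outside}, then add the multiple of $u_*$ that enforces orthogonality on $\Omega^+$ and bound that multiple by Cauchy--Schwarz. One small difference helps you: you divide by $\|u_*\|^2_{L^2(\Omega^+)}$ instead of invoking the paper's normalization $\|u_*\|_{L^2(\Omega^+)}=1$, which is inconsistent with A2 normalizing on $\Omega$.
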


\begin{proof} 
Let $\tilde{w}$ be the solution of BVP \eqref{eq:BVP1} that satisfies Lemma  \ref{lemma:bound_outside}, then
$$ \|  \tilde{w} \|_{H^1(\Omega^c_d)} \leq C_1 \| f \|_{L^2(\Omega^+)}.$$
On the other hand, the remark of Lemma \ref{lemma:bound_inside} leads to
$$ \| \tilde{w} \|_{H^1(\Omega_d)} \leq C_2 \| f \|_{L^2(\Omega^+)}. $$
 Thus
\begin{equation}
\label{eq:theorem_bound_orth_1}   \|  \tilde{w} \|_{H^1(\Omega^+)} \leq C_3 \| f \|_{L^2(\Omega^+)},
   \end{equation}
where $C_3 = C_1 + C_2.$

Let $w = \tilde{w} + C_4 u_*$, where
$$ C_4 = - \int_{\Omega^c_d}{ \tilde{w} \overline{u}_*} d {\bf r}. $$
Due to  $ (\tilde{w}, u_*)_{L^2(\Omega_d)} = 0 $ and the  the normalization condition  $ \| u_* \|_{L^2(\Omega^+)} = 1$,  it is easy to verify that  $w$ is a solution of BVP \eqref{eq:BVP1} and satisfies the condition $(w,u_*)_{L^2(\Omega^+)} = 0$. 
Thus,
\begin{eqnarray*}
 |C_4|  \leq \int_{\Omega^c_d} \left| { \tilde{w} \overline{u}_*} \right| d {\bf r} \leq \left\| \tilde{w} \right\|_{L^2(\Omega^c_d)}  \left\| u_* \right\|_{L^2(\Omega^c_d)}  \leq \left\| \tilde{w} \right\|_{H^1(\Omega^+)},
 \end{eqnarray*}
and therefore,
$$ \| w \|_{H^1(\Omega^+)} \leq   \| \tilde{w} \|_{H^1(\Omega^+)} + |C_4| \|  u_* \|_{H^1(\Omega^+)} \leq  C \| f \|_{L^2(\Omega^+)},$$
where $C = C_3 \left[1 + \|  u_* \|_{H^1(\Omega^+)} \right].$  
\end{proof}

\section{Robustness of BICs}

 In a previous work {\cite{yuan17ol}},  we used a
 perturbation method to argue that generic BICs in  periodic structures
 are robust against structural perturbations  that  preserve the periodicity and symmetries of the original
 structure. In this section, we give  a  mathematical
 proof for the robustness theory. First, we summarize the assumptions as follows. 
\begin{itemize}
\item[A1:] The dielectric function $\epsilon_*(\mathbf{r}) \in L^{\infty}(\Omega)$ of the original structure is real, periodic in $y$ with period $L$,   has reflection symmetries in both $y$ and $z$, and $\epsilon_* = 1$ for $|z| > d$.
\item[A2:] The original structure has a BIC $\{ u_*(\mathbf{r}) = \phi_*(\mathbf{r}) e^{ \mathsf{ i } \beta_* y}, \beta_*, k_* \}$ which is nondegenerate,  satisfies \eqref{eq:cond_k_beta} and 
\begin{equation}
\label{eq:genericBIC}
(\partial_y u_*, v_*)_{L^2(\Omega)} = \int_{\Omega} \overline{v}_* \partial_y u_* d \mathbf{r}   \neq 0,
\end{equation}
where $v_*(\mathbf{r}) = \varphi_*(\mathbf{r}) e^{ \mathsf{ i } \beta_* y} $ is the corresponding diffraction solution defined in Section~\ref{sec:diffraction_solution}. $u_*$ is scaled to be $\mathcal{PT}$-symmetric in $y$ and normalized such that $\| u_* \|_{L^2(\Omega)} = 1$.
\item[A3:] The perturbed structure has a dielectric function given by
\begin{equation}
\label{eq:pert_eps} \epsilon(\mathbf{r}) = \epsilon_*(\mathbf{r}) + \delta F(\mathbf{r}),
\end{equation}
where $\delta $ is the amplitude of the perturbation, and $F \in L^{\infty}(\Omega)$ is the perturbation profile, and it is real and periodic in $y$ with period $L$. In addition, $F$ has reflection symmetries in both $y$ and $z$ directions,  and satisfies $F(\mathbf{r})=0$ for $|z|>d$.
\end{itemize}

The robustness theory of the BIC can be stated as the following theorem.
\begin{theorem}
\label{theorem:robustness}
    Assume that the original structure with $\epsilon_*(\mathbf{r})$ satisfies A1, and has a BIC that satisfies A2. Then,  there is a $\delta_* > 0$, such that for any $\delta \in [-\delta_*, \delta_*]$,  the perturbed structure with dielectric function satisfying A3 has a BIC $\{ u(\mathbf{r}) = \phi(\mathbf{r}) e^{ \mathsf{ i } \beta y}, \beta, k \}$ with $u(\mathbf{r})$ near $u_*(\mathbf{r})$, $\beta$ near $\beta_*$ and $k$ near $k_*$. 
\end{theorem}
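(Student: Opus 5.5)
We construct the perturbed BIC as power series in $\delta$,
\[
u = u_* + \sum_{j\ge 1}\delta^j u_j,\qquad \beta=\beta_*+\sum_{j\ge1}\delta^j\beta_j,\qquad k=k_*+\sum_{j\ge1}\delta^j k_j,
\]
following the formal scheme of \cite{yuan17ol}. We then verify that each order is solvable via the results of Sections 3--4, and finally prove convergence. It is convenient to write $u=\phi e^{\mathsf{i}\beta y}$ and to work with the fixed Bloch factor $e^{\mathsf{i}\beta_* y}$. Setting $\tilde u=\phi e^{\mathsf{i}\beta_* y}$, the equation $\Delta\phi+2\mathsf{i}\beta\partial_y\phi+(k^2\epsilon-\beta^2)\phi=0$ becomes
\[
\Delta\tilde u+k_*^2\epsilon_*\tilde u = -2\mathsf{i}(\beta-\beta_*)\partial_y\tilde u+\bigl[(\beta-\beta_*)^2+\beta_*(\beta-\beta_*)\cdot 2\ \text{terms}\bigr]\tilde u-(k^2\epsilon-k_*^2\epsilon_*)\tilde u,
\]
with quasi-periodicity $\beta_*$ fixed. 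Collecting powers of $\delta$, the $j$-th order reads
\[
\Delta u_j+k_*^2\epsilon_*u_j=f_j,
\]
where
\[
f_j=-2\mathsf{i}\beta_j(\partial_y-\mathsf{i}\beta_*)u_*-2k_*k_j\epsilon_*u_*+R_j,
\]
and $R_j$ depends only on $u_0,\dots,u_{j-1}$, $\beta_1,\dots,\beta_{j-1}$, $k_1,\dots,k_{j-1}$ and on $F$. The equation is even in $z$, so we may work on $\Omega^+$ with BVP \eqref{eq:BVP1}.

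\textbf{Solvability at each order.} Theorem~\ref{theorem:decay} requires \eqref{eq:f_cond2} and \eqref{eq:f_cond3}, which are two complex conditions; we have two real unknowns $\beta_j,k_j$. Symmetry reduces these to two real equations. By the $\mathcal{PT}$-symmetry of $u_*$ and $v_*$ and the $y$-evenness of $\epsilon_*$ and $F$, induction shows that every $u_j$ can be chosen $\mathcal{PT}$-symmetric. Then $\int\overline{u}_*f_j$ is purely real (or purely imaginary) term by term, and likewise $\int\overline v_* f_j$. The first condition involves $(\partial_y u_*,u_*)$, which equals $\mathsf{i}\beta_*$ times a real number, together with $\int\epsilon_*|u_*|^2>0$. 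The second involves $(\partial_y u_*,v_*)$, which is nonzero by \eqref{eq:genericBIC}; moreover $\int\overline v_*\epsilon_*u_*=0$ by \eqref{eq:orth_v}. Hence the resulting $2\times2$ real linear system for $(\beta_j,k_j)$ is triangular, with nonzero diagonal entries $(\partial_yu_*,v_*)$ and $2k_*\int\epsilon_*|u_*|^2$. It is therefore uniquely solvable, and
\[
|\beta_j|+|k_j|\le C\|R_j\|_{L^2}.
\]
We also check that $f_j$ has the form \eqref{eq:f_cond1} for $z>d$. This follows inductively: the outer expansions of the $u_i$ are sums of polynomials times $e^{\mathsf{i}\beta_my-\gamma_m z}$, and the coefficients $\gamma_m$ are fixed at $\beta_*,k_*$ because we expand around the fixed Bloch factor. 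Theorem~\ref{theorem:bound_orth} then gives a decaying $u_j\perp u_*$ satisfying
\[
\|u_j\|_{H^1(\Omega^+)}\le C\|f_j\|_{L^2(\Omega^+)},
\]
and we preserve $\mathcal{PT}$-symmetry by averaging with $\overline{u}_j(-y,z)$.

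\textbf{Convergence (main obstacle).} The delicate point is bounding $\|R_j\|_{L^2(\Omega^+)}$. The term $(\partial_y-\mathsf{i}\beta_*)u_{j-i}$ costs one derivative, yet Theorem~\ref{theorem:bound_orth} only returns $H^1$ bounds from $L^2$ data, so the recursion closes in $H^1$. Inside $\Omega_d$, $\partial_y u_i$ is controlled by $\|u_i\|_{H^1}$. Outside, the structure is homogeneous, and the exponentially decaying factors make the terms decay.

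A more serious issue is that the polynomial degrees grow with $j$, so the $L^2(\Omega_d^c)$ norm of $R_j$ might grow factorially. I would first try to avoid this by rewriting the perturbation as an operator in $\beta$ and $k$, absorbing the $\beta$- and $k$-dependence into $\gamma_m(\beta,k)$ and $\alpha_0$ instead of Taylor expanding them. If that is not done, one needs estimates of the form $\|z^p e^{-\gamma z}\|\le C^p p!$, which lead to divergence. Alternatively, I would aim for the majorant bounds
\[
\|u_j\|_{H^1}+|\beta_j|+|k_j|\le C M^j
\]
by induction, comparing $R_j$, a convolution-type sum of products of previous terms, with the coefficients of a majorant function $g(\delta)$ that solves an algebraic equation $g=c\delta+ a\,\delta g+b\,g^2$. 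This is analytic near $0$, so its coefficients grow geometrically; this gives convergence for $|\delta|\le\delta_*<1/M$.

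Finally, we show that the summed series solves \eqref{eq:BICs}. The series converge in $H^1$, each partial sum satisfies the equation up to $O(\delta^{N+1})$, and passing to the limit in the weak form gives a solution. Decay follows from the fact that each $u_j$ has no $e^{\mathsf{i}\alpha_0z}$ component, with $\beta,k$ real and close to $\beta_*,k_*$.
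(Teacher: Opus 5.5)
Your proposal follows essentially the same route as the paper: order-by-order Fredholm solvability via the two orthogonality conditions against $u_*$ and $v_*$, $\mathcal{PT}$-symmetry to make $(\beta_j,k_j)$ real, the uniform bound of Theorem~\ref{theorem:bound_orth} to control $u_j$ in $H^1$, and a majorant series for convergence. Your triangular system is the paper's matrix $A$ once you fix the slip in $f_j$: the $\beta_j$-term is $-2\mathsf{i}\beta_j\partial_y u_*$, not $-2\mathsf{i}\beta_j(\partial_y-\mathsf{i}\beta_*)u_*$. The ``polynomial degree'' obstacle never arises, because the constant in Theorem~\ref{theorem:bound_orth} is independent of $f$ (hence of the degrees) and $\|R_j\|_{L^2(\Omega^+)}$ is bounded directly by $|\beta_i|,|k_i|,\|u_i\|_{H^1(\Omega^+)}$ for $i<j$; so no re-expansion of $\gamma_m$ is needed, and your majorant argument closes as in Lemmas~\ref{lemma:new_series}--\ref{lemma:convergence}, provided your majorant equation also includes the cubic terms from $(\beta-\beta_*)^2\tilde u$ and $(k-k_*)^2\tilde u$ (this is why the paper's recursion is cubic and produces Fuss--Catalan numbers).
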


In the following, we give this theorem a constructive proof. It contains two parts. First, we construct the perturbed BIC  as a power series of $\delta$ in Section~\ref{sec:powerseries}. Then, we show that the series converges if $\delta$ is sufficiently small (in absolute value) in Section~\ref{sec:convergence}. 
 
\subsection{Power series}
\label{sec:powerseries}
For the perturbed structure, we seek a BIC $\{ u(\mathbf{r}) = \phi(\mathbf{r}) e^{ \mathsf{ i } \beta y}, \beta, k \}$ by expanding  $\phi$,  $\beta$  and $k^2$  in power series of $\delta$ as:
\begin{eqnarray}
  \label{eq:expphi}  \phi  = \sum_{j=0}^{\infty} \phi_j \delta^j, \quad  \beta = \sum_{j=0}^{\infty} \tau_j \delta^j, \quad
                     k^2  = \sum_{j=0}^{\infty} g_j \delta^j, 
\end{eqnarray}
where $\phi_0 = \phi_*, g_0 = k_*^2$ and $\tau_0 = \beta_*$.  
Substituting \eqref{eq:pert_eps} and \eqref{eq:expphi}  into \eqref{eq:BICs_phi} and comparing the coefficient of $\delta^j$ for $j \geq 1$,  we obtain the following BVPs for $\phi_j$: 
\begin{eqnarray}
\label{eq:phi_j}    \left\{ \begin{array}{ll} \mathcal{L}_* \phi_j    = \tau_j B_1(\mathbf{r}) + g_j B_2(\mathbf{r}) - D_j(\mathbf{r}),  &  {\bf r} \in \Omega, \\
                          \phi_j(L/2,z) = \phi_j(-L/2,z), & z \in \mathbb{R},\\ 
                          \partial_y \phi_j (L/2,z) =  
                             \partial_y \phi_j (-L/2,z),  & z \in \mathbb{R},\\
                          \phi_j({\bf r})  \to 0 \quad \mbox{as} \quad   z
                             \to  \pm \infty, &| y | < L/2,
                          \end{array} \right.
\end{eqnarray}
where $\mathcal{L}_* =  \Delta + 2 \textsf{i} \beta_* \partial_y + k_*^2 \epsilon_*(\mathbf{r}) - \beta_*^2$, $  B_1(\mathbf{r}) = 2 ( \beta_* \phi_* - \textsf{i} \partial_y \phi_*), B_2(\mathbf{r}) = - \epsilon_* \phi_* $, and
\begin{eqnarray}
\label{eq:Dj}  
D_{j} &=&\sum\limits_{n=1}^{j}  g_{n-1} F \phi_{j-n}    - \sum_{n=2}^{j}   \sum\limits_{m=1}^{n-1} \tau_m \tau_{n-m}  \phi_{j-n} \\
& & + \sum\limits_{n=1}^{j-1} \left(   g_n \epsilon_*  - 2   \beta_* \tau_n  + 2 {\mathsf{i}} \tau_n \partial_y  \right) \phi_{j-n}. \nonumber
\end{eqnarray}
Note that $D_j$ is only related to $\phi_n, \tau_n$, and $g_n$ for $n \leq j - 1$.

We show that for each $j \geq 1$, there are real and unique $\tau_j$ and $g_j$  such that  BVP \eqref{eq:phi_j} has a solution $\phi_j$.
We have the following theorem.
\begin{theorem}
\label{theorem:exist_phij}
    If assumptions A1, A2, and A3 are true, then there exist two real sequences $\{ \tau_j \}_{j=1}^{\infty}$ and $\{ g_j \}_{j=1}^{\infty}$, such that for each $j \geq 1$, BVP \eqref{eq:phi_j} has a solution $\phi_j$ that satisfies $$\| \phi_j \|_{H^1(\Omega)} \leq C \|  D_j\|_{L^2(\Omega)},$$
    where $C$ is a constant independent of $\tau_j, g_j$ and $\phi_j$ for $j \geq 1$.
\end{theorem}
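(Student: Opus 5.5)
We argue by induction on $j$, reducing each BVP \eqref{eq:phi_j} to the inhomogeneous problem of Section~\ref{sec:solvability}.

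\textbf{Reduction.} Multiply by $e^{\mathsf{i}\beta_* y}$. Then $w_j=\phi_j e^{\mathsf{i}\beta_* y}$ satisfies $\Delta w_j+k_*^2\epsilon_* w_j=f_j$ with the quasi-periodic conditions, where
\[
f_j=\bigl(\tau_j B_1+g_jB_2-D_j\bigr)e^{\mathsf{i}\beta_* y}.
\]
We assume inductively that each $\phi_n$ ($n<j$) is even in $z$, $\mathcal{PT}$-symmetric, and has the exterior form \eqref{eq:f_cond1}. Since $F=0$ and $\epsilon_*=1$ for $z>d$, $D_j e^{\mathsf{i}\beta_* y}$ is then a finite sum of terms $P_m(z)e^{\mathsf{i}\beta_m y-\gamma_m z}$ with polynomial $P_m$ and $m\neq0$. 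The same holds for $B_1e^{\mathsf{i}\beta_* y}=2(\beta_* u_*-\mathsf{i}\partial_y u_*)$, whose $m$-th coefficient is $2(\beta_*-\beta_m)c_m^+$, and for $B_2e^{\mathsf{i}\beta_* y}=-\epsilon_* u_*$. So $f_j$ satisfies \eqref{eq:f_cond1} and is even in $z$, and we may work on $\Omega^+$.

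\textbf{Choice of $\tau_j,g_j$.} Conditions \eqref{eq:f_cond2} and \eqref{eq:f_cond3} give a $2\times2$ linear system
\[
\tau_j(B_1e^{\mathsf{i}\beta_* y},u_*)+g_j(B_2e^{\mathsf{i}\beta_* y},u_*)=(D_je^{\mathsf{i}\beta_* y},u_*),
\]
together with the same equation with $u_*$ replaced by $v_*$. The coefficients are:
\begin{itemize}
\item $(B_2 e^{\mathsf{i}\beta_* y},u_*)=-\int\epsilon_*|u_*|^2\neq0$.
\item $(B_2 e^{\mathsf{i}\beta_* y},v_*)=0$ by the orthogonality \eqref{eq:orth_v}.
\item $(B_1e^{\mathsf{i}\beta_* y},v_*)=-2\mathsf{i}(\partial_y u_*,v_*)+2\beta_*(u_*,v_*)$. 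The second term is not automatically zero, because \eqref{eq:orth_v} has the weight $\epsilon_*$. The system is still triangular in the relevant sense: the determinant is $(B_2e^{\mathsf{i}\beta_* y},u_*)\,(B_1e^{\mathsf{i}\beta_* y},v_*)$. Nonvanishing of $(B_1e^{\mathsf{i}\beta_* y},v_*)$ has to be tied to \eqref{eq:genericBIC}. A likely route is Green's identity between $u_*$ and $v_*$ together with the fact that $\partial_\beta$ of the equation produces $B_1$. Failing that, we take $(B_1e^{\mathsf{i}\beta_* y},v_*)\ne0$ as the precise content of A2.
\end{itemize}
Hence $\tau_j,g_j$ are uniquely determined and $|\tau_j|+|g_j|\le C\|D_j\|_{L^2}$, using Cauchy--Schwarz with $u_*,v_*$. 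A complication is that $v_*$ is not in $L^2$. However, the integrand $\overline v_* f$ decays because $f$ has only evanescent modes $m\ne0$: its pairing with the propagating part of $v_*$ vanishes, and the evanescent part of $v_*$ is in $L^2$. The pairing is therefore bounded by $C\|f\|_{L^2}$. Reality of $\tau_j,g_j$ follows from the $\mathcal{PT}$ symmetry: if $D_j$ is $\mathcal{PT}$-symmetric, the pairings are real. We check this for $u_*$ and $B_2$ directly. For $v_*$, the coefficient of the $\partial_y$ term picks up the $\mathsf{i}$ needed to make $\mathsf{i}\tau_n\partial_y$ consistent, so the induction hypothesis propagates.

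\textbf{Solving and bounding.} With \eqref{eq:f_cond1}--\eqref{eq:f_cond3} verified, Theorem~\ref{theorem:decay} gives a decaying solution. Theorem~\ref{theorem:bound_orth} selects the one orthogonal to $u_*$ with $\|w_j\|_{H^1(\Omega^+)}\le C\|f_j\|_{L^2(\Omega^+)}$. Since $\|B_1\|,\|B_2\|$ are fixed constants, this is $\le C'\|D_j\|$. Extend $w_j$ evenly to $\Omega$ and set $\phi_j=w_je^{-\mathsf{i}\beta_* y}$. The $H^1$ norms of $w_j$ and $\phi_j$ are equivalent, with constants depending only on $\beta_*$. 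Uniqueness of the orthogonal solution, combined with the invariance of the problem under $\mathcal{PT}$, shows that $\phi_j$ inherits $\mathcal{PT}$ symmetry; this closes the induction. The constant $C$ depends only on $u_*,v_*,\epsilon_*$, not on $j$.

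\textbf{Main obstacle.} The hardest step is the nondegeneracy of the $2\times2$ system, that is, identifying it with condition \eqref{eq:genericBIC}, together with the reality of the resulting coefficients.
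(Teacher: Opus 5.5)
Your route is the paper's route. You set $w_j=\phi_j e^{\mathsf{i}\beta_* y}$, turn \eqref{eq:f_cond2}--\eqref{eq:f_cond3} into a $2\times 2$ system whose $(2,2)$ entry vanishes by \eqref{eq:orth_v}, apply Theorems~\ref{theorem:decay} and \ref{theorem:bound_orth}, and transfer the bound back to $\phi_j$. The one step that does not close as written is the one you call the main obstacle, and it comes from an algebra slip. You wrote $B_1e^{\mathsf{i}\beta_* y}=2(\beta_* u_*-\mathsf{i}\partial_y u_*)$, which replaces $\phi_*$ by $u_*$ without differentiating the Bloch factor. Since $\partial_y\phi_*=(\partial_y u_*-\mathsf{i}\beta_* u_*)e^{-\mathsf{i}\beta_* y}$, the correct computation is
\[
B_1e^{\mathsf{i}\beta_* y}=2\bigl(\beta_* u_*-\mathsf{i}\partial_y u_*-\beta_* u_*\bigr)=-2\mathsf{i}\,\partial_y u_*,
\]
with exterior coefficients $2\beta_m c_m^{\pm}$, not $2(\beta_*-\beta_m)c_m^+$. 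Hence $(B_1e^{\mathsf{i}\beta_* y},v_*)=-2\mathsf{i}(\partial_y u_*,v_*)$, with no $2\beta_*(u_*,v_*)$ term, and its nonvanishing is exactly condition \eqref{eq:genericBIC}. The determinant is then $-2\mathsf{i}\,(\epsilon_*u_*,u_*)_{L^2(\Omega)}(\partial_y u_*,v_*)_{L^2(\Omega)}\neq 0$, which is the paper's matrix $A$. With your formula, nondegeneracy would not follow from A2. Your fallback of taking $(B_1e^{\mathsf{i}\beta_* y},v_*)\neq 0$ as the content of A2 would then be a different hypothesis. After the correction it is the same hypothesis, and no Green's identity argument is needed.

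The rest is sound, and in two places it is more careful than the paper. First, the paper reads off $|\tau_j|,|g_j|\le C_1\|D_j\|_{L^2(\Omega)}$ from \eqref{eq:2by2} without noting that $v_*\notin L^2(\Omega)$. Your observation justifies that bound: for $|z|>d$, $D_je^{\mathsf{i}\beta_* y}$ carries only the modes $m\neq 0$, so only the square-integrable evanescent part of $v_*$ enters the pairing. Second, the paper simply asserts that $w_j$ is $\mathcal{PT}$-symmetric. Your argument supplies this: $w\mapsto\overline{w}(-y,z)$ preserves the problem and orthogonality to $u_*$, and the orthogonal solution is unique. This symmetry is what makes $\tau_{j+1},g_{j+1}$ real at the next step. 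Your sentence about reality of the $v_*$ pairings is garbled. The clean statement is that $\mathsf{i}\partial_y$ maps $\mathcal{PT}$-symmetric functions to $\mathcal{PT}$-symmetric ones, and that the $L^2$ pairing of two $\mathcal{PT}$-symmetric functions is real. Hence $A$ and the right-hand side of \eqref{eq:2by2} are real.
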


\begin{proof}

Without loss of generality, we assume  that $u_*$ and $v_*$ are even in $z$. Thus, $B_1, B_2$ and $D_j$ are  even in $z$. Let $w_j = \phi_j e^{\textsf{i} \beta_* y}$ and $f_j = ( \tau_j B_1 + g_j B_2 - D_j ) e^{\textsf{i} \beta_* y} $ for each $j \geq 1$, then BVP \eqref{eq:phi_j}  is equivalent to 
\begin{eqnarray}
\label{eq:w_j}    \left\{ \begin{array}{ll} \left[ \Delta + k_*^2 \epsilon_*(\mathbf{r}) \right] w_j    = f_j(\mathbf{r}),  &  {\bf r} \in \Omega^+, \\
                          w_j(L/2,z) = e^{ \mathsf{ i } \beta_* L} w_j(-L/2,z), & z > 0,\\ 
                          \partial_y w_j (L/2,z) =  e^{ \mathsf{ i } \beta_* L} \partial_y w_j (-L/2,z),  & z > 0,\\
                          \partial_z w_j(y,0) = 0, & | y | < L/2,\\
                          w_j({\bf r})  \to 0 \quad \mbox{as} \quad   z
                             \to  \pm \infty, & | y | < L/2.
                          \end{array} \right.
\end{eqnarray}
We only need to show that  BVP \eqref{eq:w_j} for each $j \geq 1$ has a solution $w_j$ that is bounded by $f_j$ in $\Omega^+$.
This is equivalent to show that BVP \eqref{eq:BVP1} with $f = f_j$ for each $j \geq 1$ has a solution $w_j$ that decays to zero as $z \to +\infty$ and satisfies $ \| w_j \|_{H^1(\Omega)} \leq C \|  D_j\|_{L^2(\Omega)}.$

We first prove the solvability.
By Theorem \ref{theorem:decay}, we only need to choose real $\tau_j$ and $g_j$ such that  $f_j$ satisfies conditions \eqref{eq:f_cond1} - \eqref{eq:f_cond3} for all $j \geq 1$.  
For $j = 1$, we have $D_1(\mathbf{r}) = k_*^2 F \phi_*$ and $f_1(\mathbf{r}) = - 2 {\mathsf{i}} \tau_1 \partial_yu_* - g_1 \epsilon_* u_* - k_*^2 F u_*$. Using the  expansion \eqref{eq:BIC_fourier} of the BIC, we obtain
$$f_1 = \sum\limits_{m\neq 0} (2\beta_m \tau_1 - 2g_1) e^{ \mathsf{ i } \beta_m y - \gamma_m z}, \quad z > d.$$
Thus, $f_1$ satisfies the condition \eqref{eq:f_cond1} for any $\tau_1$ and $g_1$. We choose $\tau_1$ and $g_1$ such that 
\begin{eqnarray}
    \label{eq:constriant_w1}  (f_1, u_*)_{L^2(\Omega)} =0, \quad (f_1, v_*)_{L^2(\Omega)}  = 0.
\end{eqnarray}
This leads to a  linear system for $\tau_1$ and $g_1$ as
\begin{equation}
  \label{eq:2by2_n1}
  A 
\begin{bmatrix} \tau_1 \cr g_1 \end{bmatrix} 
=  \begin{bmatrix} \left(u_*,  D_1 e^{\mathsf{i} \beta_* y} \right)_{L^2(\Omega)} \cr \left(v_*,  D_1 e^{\mathsf{i} \beta_* y} \right)_{L^2(\Omega)}\end{bmatrix} , 
\end{equation}
where
$$A =  - \begin{bmatrix}  2 {\mathsf{i} } (\partial_y u_*, u_*)_{L^2(\Omega)} &   (\epsilon_* u_*, u_*)_{L^2(\Omega)} \cr  2 {\mathsf{i} } (\partial_y u_*, v_*)_{L^2(\Omega)} & 0 \end{bmatrix}.$$
Note that $u_*, v_*,$ and $ f_1$ are even in $z$, conditions \eqref{eq:constriant_w1} are equivalent to  
$$(f_1, u_*)_{L^2(\Omega^+)}  = (f_1, v_*)_{L^2(\Omega^+)}  =0.$$ 
Thus, we can solve $\tau_1$ and $g_1$ from \eqref{eq:2by2_n1}, and $f_1$ satisfies  \eqref{eq:f_cond2} and \eqref{eq:f_cond3}. 
Since condition \eqref{eq:genericBIC} ensures that $ (\partial_y u_*, v_*)_{L^2(\Omega)}  \neq 0 $, and $u_*, v_*, {\mathsf{i}}\partial_y u_*$ and $ D_1$ are all $\mathcal{PT}$-symmetric in $y$, the linear system \eqref{eq:2by2_n1} is invertible and real. Thus,  $\tau_1$ and $g_1$  are real, and  BVP \eqref{eq:w_j} for $j=1$ admits a solution $w_1$ satisfying $w_1 \to 0$ as $z \to +\infty$.   In addition, $w_1$ is $\mathcal{PT}$-symmetric in $y$ and
\begin{equation}
    \label{eq:w1_fourier} w_1(\mathbf{r}) = \sum\limits_{m\neq 0}  Q_m^{(1)}(z)  e^{ \mathsf{ i } \beta_m y - \gamma_m z}, \quad z > d,
\end{equation}
where $Q_m^{(1)}(z)$ are polynomials of order $1$.

For $j \geq 2$, it is easy to verify  that $f_j$ can be written as 
\begin{equation*}
    \label{eq:fj_fourier} f_j(\mathbf{r}) = \sum\limits_{m\neq 0}  P_m^{(j-1)}(z)  e^{ \mathsf{ i } \beta_m y - \gamma_m z}, \quad z > d,
\end{equation*}
where $P_m^{(j-1)}(z)$ are polynomials of order $j-1$. Thus, $f_j$ satisfies \eqref{eq:f_cond1}. 
We choose $\tau_j$ and $g_j$ such that $ (f_j, v_*)_{L^2(\Omega)} = ( f_j, v_*)_{L^2(\Omega)}  =0$. Then, $f_j$ satisfies \eqref{eq:f_cond2} and \eqref{eq:f_cond3}.  We can check that $f_j$ and $D_j$ are $\mathcal{PT}$-symmetric in $y$. Thus, real $\tau_j$ and $g_j$ can be solved from the linear system
\begin{equation}
  \label{eq:2by2}
  A 
\begin{bmatrix} \tau_j \cr g_j \end{bmatrix} 
=  \begin{bmatrix} \left( D_j e^{\mathsf{i} \beta_* y}, u_* \right)_{L^2(\Omega)} \cr \left( D_j e^{\mathsf{i} \beta_* y}, v_* \right)_{L^2(\Omega)}\end{bmatrix}. 
\end{equation}
 Similarly,  $w_j$ is $\mathcal{PT}$-symmetric and
\begin{equation}
    \label{eq:wj_fourier} w_j(\mathbf{r}) = \sum\limits_{m\neq 0}  Q_m^{(j)}(z)  e^{ \mathsf{ i } \beta_m y - \gamma_m z}, \quad z > d,
\end{equation}
where $Q_m^{(j)}$ are polynomials of $z$ of order $j$.

We next prove that $w_j$ and $\phi_j$ are bounded by $D_j$ for $j \geq 1$. Note that matrix $A$ is invertible and  related to the BIC $u_*$ and the corresponding diffraction solution $v_*$. From linear system \eqref{eq:2by2}, we have
\begin{equation}
\label{eq:bound_tau_k} |\tau_j| \leq C_1 \| D_j \|_{L^2(\Omega)}, \quad  |g_j| \leq C_1 \| D_j \|_{L^2(\Omega)},
 \end{equation}
where $C_1$ is a constant related to matrix $A$. By Theorem \ref{theorem:bound_orth}, if we choose $w_j$ such that $(w_j, u_*)_{L^2(\Omega)} = 2(w_j, u_*)_{L^2(\Omega^+)} = 0$, i.e.,  $(\phi_j, \phi_*)_{L^2(\Omega)}  = 0$, then 
$$ \| w_j \|_{H^1(\Omega)} = 2 \| w_j \|_{H^1(\Omega^+)} \leq 2 \| f_j \|_{L^2(\Omega^+)} = \| f_j \|_{L^2(\Omega)}. $$
Noting that $f_j = ( \tau_j B_1 + g_j B_2 - D_j ) e^{\textsf{i} \beta_* y} $ and using \eqref{eq:bound_tau_k}, we have
$$    \| w_j \|_{H^1(\Omega)} \leq   C_2 \| D_j \|_{L^2(\Omega)}, $$
where $C_2 = C_1\left( \| B_1 \|_{L^2(\Omega)} + \| B_2 \|_{L^2(\Omega)}\right) + 1.$  Since $\phi_j = w_j e^{ -\mathsf{i} \beta_* y }$, we have
$$ \| \nabla \phi_j \|_{L^2(\Omega)} \leq (1 + 2 |\beta_*|) \| \nabla w_j \|_{L^2(\Omega)} +  \beta_*^2 \|  w_j \|_{L^2(\Omega)} \leq (1 + |\beta_*|)^2 \|  w_j \|_{H^1(\Omega)}.  $$
Thus
$$ \|  \phi_j \|_{H^1(\Omega)} \leq \left[ (1 + |\beta_*|)^2 + 1 \right] \|  w_j \|_{H^1(\Omega)} \leq C \| D_j \|_{L^2(\Omega)},$$
where $C = C_2 \left[ (1 + |\beta_*|)^2 + 1 \right].$
\end{proof}

\subsection{Convergence}
\label{sec:convergence}
In this section, we prove that all power series in \eqref{eq:expphi}  are convergent for sufficiently small $\delta$.  
We begin by proving the following lemma.
\begin{lemma}
\label{lemma:new_series} Suppose assumptions A1, A2, and A3 are true, and let $\{ \tau_j \}_{j=1}^{\infty}$, $\{ g_j \}_{j=1}^{\infty}$ and $\{\phi_j\}_{j=1}^{\infty}$ be the sequences obtained in Theorem \ref{theorem:exist_phij}. Then there exists a constant $C_* > 1$ such that sequence $\{d_j\}_{j=0}^{\infty}$, where $d_0 = 2 C_*^2$ and $d_j = 2 C_*^2 \| D_j \|_{L^2(\Omega)}$ for $j \geq 1$, satisfies
$$ d_j  \leq  d_0 \sum_{n=1}^{j} d_n d_{j-n}  + \sum_{n=2}^{j} \sum_{m=1}^{n-1} d_m d_{n-m} d_{j-n} .$$
\end{lemma}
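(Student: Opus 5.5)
We bound $\|D_j\|_{L^2(\Omega)}$ term by term from the definition \eqref{eq:Dj}, and then use Theorem~\ref{theorem:exist_phij} together with \eqref{eq:bound_tau_k} to convert every factor $\|\phi_n\|_{H^1(\Omega)}$, $|\tau_n|$, $|g_n|$ (for $n\ge 1$) into a multiple of $\|D_n\|_{L^2(\Omega)}$. Let $C$ and $C_1$ be the constants of Theorem~\ref{theorem:exist_phij} and \eqref{eq:bound_tau_k}. Then for $n\ge1$,
\[
\|\phi_n\|_{H^1(\Omega)}\le C\|D_n\|_{L^2(\Omega)},\qquad |\tau_n|,\ |g_n|\le C_1\|D_n\|_{L^2(\Omega)}.
\]
The index-$0$ quantities are fixed numbers: $\|\phi_0\|_{H^1(\Omega)}=\|\phi_*\|_{H^1(\Omega)}$, $|g_0|=k_*^2$, and $|\tau_0|=|\beta_*|$ (although $\tau_0$ does not actually appear in $D_j$). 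We will choose $C_*>1$ larger than all of these constants, and also larger than the operator-type constants $\|F\|_{L^\infty}$, $\|\epsilon_*\|_{L^\infty}$, $2|\beta_*|+2$ that arise when $F\phi$, $\epsilon_*\phi$, $\tau\phi$, $\partial_y\phi$ are estimated in $L^2$ by $\|\phi\|_{H^1}$.

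The three groups in \eqref{eq:Dj} are handled as follows.
\begin{itemize}
\item First group: $\|g_{n-1}F\phi_{j-n}\|_{L^2}\le \|F\|_{L^\infty}|g_{n-1}|\,\|\phi_{j-n}\|_{H^1}$. With $\tilde d_n:=C_*\|D_n\|_{L^2(\Omega)}$ for $n\ge1$ and $\tilde d_0:=C_*$, this term is at most a fixed power of $C_*$ times $\tilde d_{n-1}\tilde d_{j-n}$.
\item Second group (triple products): $\|\tau_m\tau_{n-m}\phi_{j-n}\|_{L^2}$ is bounded by a power of $C_*$ times $\tilde d_m\tilde d_{n-m}\tilde d_{j-n}$.
\item Third group: $\|(g_n\epsilon_*-2\beta_*\tau_n+2\mathsf{i}\tau_n\partial_y)\phi_{j-n}\|_{L^2}\le C_*^{2}\,(|g_n|+|\tau_n|)\,\|\phi_{j-n}\|_{H^1}$. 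Since $n\le j-1$ here, $j-n\ge1$, so both factors are controlled by $D$-norms, giving a bound of the form $\tilde d_n\tilde d_{j-n}$.
\end{itemize}

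Next we reindex the first group, $n-1\mapsto n$, so that its terms become $\tilde d_{n}\tilde d_{j-n-1}$ with $n$ running from $0$ to $j-1$. The $n=0$ term is $g_0F\phi_{j-1}$, of size $\tilde d_0\tilde d_{j-1}$, which fits the target pattern $d_0\,d_n d_{j-n}$ with $n=j-1$. In the target, the term $d_j d_0$ (at $n=j$) would involve $d_j$ itself; it is harmless, since it only adds to the right-hand side. The products in the first and third groups can all be dominated by $\sum_{n=1}^{j} d_nd_{j-n}$, provided that $d_0$ is large relative to the constants, so that the extra factor $d_0$ in front absorbs every constant. This is why $d_0=2C_*^2$ and $d_j=2C_*^2\|D_j\|_{L^2(\Omega)}$ are chosen: multiplying the inequality for $\|D_j\|$ by $2C_*^2$ and converting each $\|D_n\|$ into $d_n/(2C_*^2)$ leaves residual constants, and $C_*$ is taken large enough that all of them are $\le 1$.

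The main obstacle is this constant bookkeeping. The mismatched index shift in the first group (the product $d_{n-1}d_{j-n}$ has indices summing to $j-1$, not $j$) has to be accommodated, and the index-$0$ factors have to be compared honestly. For the shift, my first attempt would be monotonicity-free domination. Because $d_0\ge 1$, the first-group term $d_{n-1}d_{j-n}$ with $n-1\ge1$ can be written as $d_{n-1}d_{j-n}\cdot(d_0/d_0)$, and with care regrouped as $d_0\cdot d_{n-1}d_{(j-1)-(n-1)}$. I would check whether the intended statement really needs such terms to match $\sum_{n=1}^j d_nd_{j-n}$ exactly, or whether a majorant argument (treating the first group as lower order) suffices, and adjust $C_*$ accordingly.
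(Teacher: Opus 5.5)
The step you yourself flag as ``the main obstacle'' is never closed, and it is the whole content of the lemma. The first group of \eqref{eq:Dj}, $g_{n-1}F\phi_{j-n}$, produces products $d_{n-1}d_{j-n}$ whose indices sum to $j-1$. Every term $d_0d_nd_{j-n}$ ($1\le n\le j-1$) and every triple product on the right-hand side has index sum $j$, so it carries an extra factor.

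Your claim that the bound $\tilde d_0\tilde d_{j-1}$ for $g_0F\phi_{j-1}$ ``fits'' $d_0d_{j-1}d_1$ silently needs a lower bound on $d_1=2C_*^2k_*^2\|F\phi_*\|_{L^2(\Omega)}$. Your choice of $C_*$ does not provide one. Scaling $F$ down leaves $C$, $C_1$, and hence $C_*$, unchanged once $\|F\|_{L^\infty}$ is below the other constants, while $d_1\to0$. Rewriting $d_{n-1}d_{j-n}$ as $d_0\cdot d_{n-1}d_{j-n}/d_0$ does not change the index sum either. As proposed, the first group is not accounted for.

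The resolution is that the displayed inequality is misprinted. Read literally, its $n=j$ term is $d_0\,d_j\,d_0=4C_*^4d_j\ge d_j$, so it holds trivially. You noticed that term and called it ``harmless'', but it settles the literal statement in one line. It is also a warning sign: with it, the majorant recursion in Lemma~\ref{lemma:convergence} would read $e_j=e_0^2e_j+\cdots$ with $e_0^2>1$, which has no positive solution. What the paper proves, and what Lemma~\ref{lemma:convergence} actually uses, is
\[
d_j\le d_0\sum_{n=1}^{j}d_{n-1}d_{j-n}+\sum_{n=2}^{j}\sum_{m=1}^{n-1}d_md_{n-m}d_{j-n}.
\]
For this inequality the first group matches exactly, with no index shift.

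Your bookkeeping then has to change for the third group. Its products $d_nd_{j-n}$ have index sum $j$, so they go into the double sum as its $n=j$ slice $\sum_{m=1}^{j-1}d_md_{j-m}d_0$.

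Concretely, set $\|D_0\|_{L^2(\Omega)}:=1$ and use $\|\phi_*\|_{L^2(\Omega)}=1$, so only $L^2$ norms of $\phi_0$ are needed. Take $C_*=\max\{2,C,C_1,\|\epsilon_*\|_{L^\infty},\|F\|_{L^\infty},2|\beta_*|,k_*^2\}$. The three groups are then bounded by $C_*^3$, $C_*^3$ and $3C_*^3$ times the corresponding products of the $\|D_n\|_{L^2(\Omega)}$. Absorbing the third group into the double sum gives a coefficient $4C_*^3$. Multiplying through by $2C_*^2$ leaves coefficients $C_*/2\le d_0$ and $1/C_*\le 1$.
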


\begin{proof}
For each $j \geq 1$, using \eqref{eq:Dj}, we have
\begin{eqnarray}
\| D_j \|_{L^2(\Omega)} & \leq &  \sum_{n=1}^{j} |g_{n-1}| \| F\|_{L^{\infty}(\Omega)} \| \phi_{j-n} \|_{L^2(\Omega)}  \\
&+& \sum_{n=2}^{j} \sum\limits_{m=1}^{n-1} |\tau_m| |\tau_{n-m}| \| \phi_{j-n} \|_{L^2(\Omega)} \nonumber  \\
&+& \sum_{n=1}^{j-1} \left(  |g_n| \| \epsilon_* \|_{L^{\infty}(\Omega)} + 2 |\beta_*| |\tau_n| + 2 |\tau_n| \right) \| \phi_{j-n} \|_{H^1(\Omega)} . \nonumber 
\end{eqnarray}
Let $$  C_* = \max \left\{2, C, C_1, \|\epsilon_*\|_{L^{\infty}(\Omega)}, \|F\|_{L^{\infty}(\Omega)},  2|\beta_*|, k_*^2 \right\},$$
where $C$ and $C_1$ are the constants in Theorem \ref{theorem:exist_phij}.
Then, for $j \geq 1$, we have
$$ |\tau_j | \leq C_* \| D_j \|_{L^2(\Omega)}, \quad  |g_j | \leq  C_* \| D_j \|_{L^2(\Omega)}, \quad \| \phi_j \|_{H^1(\Omega)} \leq C_* \| D_j \|_{L^2(\Omega)}.$$
Let
 $\| D_0 \|_{L^2(\Omega)} = 1$ and note that $\| \phi_* \|_{L^2(\Omega)} = \| u_* \|_{L^2(\Omega)} = 1$, then
\begin{eqnarray}
\| D_j \|_{L^2(\Omega)} & \leq &  C_*^3 \sum_{n=1}^{j} \| D_{n-1} \|_{L^2(\Omega)} \| D_{j-n} \|_{L^2(\Omega)}  \\ 
&+& C_*^3 \sum_{n=2}^{j} \sum\limits_{m=1}^{n-1} \| D_{m} \|_{L^2(\Omega)} \| D_{n-m} \|_{L^2(\Omega)} \| D_{j-n} \|_{L^2(\Omega)}  \nonumber \\
&+& 3 C_*^3 \sum_{n=1}^{j-1} \| D_{n} \|_{L^2(\Omega)}  \| D_{j-n} \|_{L^2(\Omega)}  \nonumber \\
&\leq& C_*^3 \sum_{n=1}^{j} \| D_{n-1} \|_{L^2(\Omega)} \| D_{j-n} \|_{L^2(\Omega)}  \nonumber \\
&+& 4 C_*^3 \sum_{n=2}^{j} \sum\limits_{m=1}^{n-1} \| D_{m} \|_{L^2(\Omega)} \| D_{n-m} \|_{L^2(\Omega)} \| D_{j-n} \|_{L^2(\Omega)}. \nonumber
\end{eqnarray}
Multiplying  $2 C_*^2$ to both sides of above inequality, and letting $d_j = 2 C_*^2 \| D_j \|_{L^2(\Omega)} $ for $j \geq 0$, we have
$$ d_j  \leq  d_0 \sum_{n = 1}^{j} d_{n-1} d_{j-n}  + \sum_{n=2}^{j} \sum_{m=1}^{n-1} d_m d_{n-m} d_{j-n}, $$
for $j \geq 1$.
\end{proof}

\begin{lemma}
\label{lemma:convergence} Suppose assumptions A1, A2, and A3 are true, and let  $\{ \tau_j \}_{j=1}^{\infty}$, $\{ g_j \}_{j=1}^{\infty}$ and $\{\phi_j\}_{j=1}^{\infty}$ be the sequences obtained in Theorem \ref{theorem:exist_phij}. Then, there exist a $\delta_* > 0$ such that all power series in \eqref{eq:expphi} converge for $|\delta| \leq \delta_*$.
\end{lemma}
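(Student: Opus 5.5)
The plan is a majorant-series argument built on the recursive inequality of Lemma~\ref{lemma:new_series}. First I define a comparison sequence $\{e_j\}_{j=0}^{\infty}$ by $e_0 = d_0$ and, for $j \geq 1$, by the same recursion taken with equality:
\[
e_j = e_0 \sum_{n=1}^{j} e_{n-1} e_{j-n} + \sum_{n=2}^{j}\sum_{m=1}^{n-1} e_m e_{n-m} e_{j-n}.
\]
The right-hand side involves only terms with indices $<j$ and has nonnegative coefficients. So induction on $j$ together with Lemma~\ref{lemma:new_series} gives $0 \le d_j \le e_j$ for all $j$. (The lemma is written with $d_n d_{j-n}$ in the first sum in one place and $d_{n-1}d_{j-n}$ in the proof; I will use the version the proof actually establishes. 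Either way the inequality is a polynomial recursion in lower-index terms, and the argument below is unchanged.)

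Second, I introduce the generating function $E(x) = \sum_{j\ge0} e_j x^j$ and show it has a positive radius of convergence. Multiplying the recursion by $x^j$ and summing over $j\ge1$ gives, at the level of formal power series:
\begin{itemize}
\item the first sum $\sum_{n=1}^{j} e_{n-1}e_{j-n}$ is the coefficient of $x^{j-1}$ in $E^2$, so it contributes $e_0\, x E(x)^2$;
\item the double sum is the coefficient of $x^j$ in $(E-e_0)^2 E$, restricted to $n\ge2$, which holds automatically.
\end{itemize}
Hence $E$ satisfies the algebraic equation
\[
\Phi(x,E) := E - e_0 - e_0 x E^2 - (E-e_0)^2E = 0 .
\]
We have $\Phi(0,e_0)=0$ and $\partial_E\Phi(0,e_0) = 1 \neq 0$. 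By the analytic implicit function theorem there is a unique function $E(x)$, analytic near $x=0$ with $E(0)=e_0$, solving this equation. Its Taylor coefficients satisfy the same recursion, so by uniqueness of the coefficient recursion they equal $e_j$. Consequently $E$ has radius of convergence $\rho>0$, and $e_j \le M r^{-j}$ for any fixed $r<\rho$.

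An alternative that avoids complex analysis is to guess $e_j \le A K^j$ and verify it inductively. The second route requires controlling the number of terms in the convolution sums, which is exactly the Catalan-type growth, so bounds like $e_j\le AK^j/(j+1)^{3/2}$ would be needed. I therefore prefer the generating-function route; this is the step I expect to be the main obstacle, mainly in checking carefully that the formal identity for $E$ is exact.

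Finally, from $d_j = 2C_*^2\|D_j\|_{L^2(\Omega)}$ and the bounds $|\tau_j|, |g_j|, \|\phi_j\|_{H^1(\Omega)} \le C_*\|D_j\|_{L^2(\Omega)}$ from the proof of Lemma~\ref{lemma:new_series}, each of $|\tau_j|$, $|g_j|$ and $\|\phi_j\|_{H^1(\Omega)}$ is bounded by $e_j/(2C_*) \le M' r^{-j}$. Choosing $\delta_* < r$, the series in \eqref{eq:expphi} converge absolutely and uniformly for $|\delta|\le\delta_*$: the scalar ones in $\mathbb{R}$ and the one for $\phi$ in $H^1(\Omega)$ (Banach space, absolute convergence by comparison with a geometric series).
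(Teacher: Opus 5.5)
Your argument is correct, but it handles the key step differently from the paper. That step is bounding the growth of the majorant sequence $e_j$.

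\textbf{The paper's route.} The paper never works with the generating function of $e_j$ itself. It uses the monotonicity of $e_j$ and a re-indexing with a multiplicity count to get $e_j \le 9\sum_{n=0}^{j-2}\sum_{m=0}^{n} e_{m+1}e_{n-m+1}e_{j-n-1}$. It then dominates $3e_{j+1}$ by the pure cubic recursion $p_j=\sum_{n=0}^{j-1}\sum_{m=0}^{n}p_m p_{n-m}p_{j-1-n}$ with $p_0=3e_1$. It identifies $p_j=p_0^{2j+1}A_j(3,1)$ with the Fuss--Catalan numbers, whose generating function satisfies $P=p_0+xP^3$. From their known asymptotics it reads off the explicit value $\delta_*=4/(27p_0^2)=1/(3888C_*^{12})$, with convergence on the closed disk.

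\textbf{Your route.} You instead apply the analytic implicit function theorem to the exact equation $E-e_0=e_0xE^2+(E-e_0)^2E$. Your derivation of this identity is right. Identifying the analytic solution's Taylor coefficients with $e_j$ through the triangular recursion is also sound.

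\textbf{Comparison.} Your approach is shorter and avoids the somewhat delicate combinatorial comparison. It also allows any $\delta_*$ below the true radius of convergence of $\sum_j e_j x^j$, which is at least the paper's value. The cost is that $\delta_*$ is only shown to exist, not given in closed form. Since $e_j\ge 0$, that radius could in principle be located as the first positive $x$ at which $\partial_E\Phi(x,E(x))=0$.

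\textbf{One correction to your parenthetical remark.} The two forms of Lemma~\ref{lemma:new_series} are not interchangeable. With $d_n d_{j-n}$ in the first sum, the $n=j$ term is $d_0^2 d_j\ge d_j$. That inequality is therefore vacuous and does not define a recursion in lower-index terms. You are right to rely on the $d_{n-1}d_{j-n}$ form, which is what the proof actually establishes.
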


\begin{proof}
By Lemma \ref{lemma:new_series},  we have 
$$\| \phi_j \|_{H^1(\Omega)} \leq d_j, \quad |\tau_j| \leq d_j, \quad |g_j| \leq d_j$$
for $j \geq 1$. We only show that the power series $ \sum\limits_{j=0}^{\infty} d_j \delta^j$ converges for any sufficiently small $\delta$.

Define a sequence $\{e_j\}_{j=0}^{\infty}$ by  $e_0 = d_0 = 2 C_*^2$, and
$$ e_j  =    e_0 \sum_{n=1}^{j} e_{n-1} e_{j-n}  + \sum_{n=2}^{j} \sum_{m=1}^{n-1} e_m e_{n-m} e_{j-n} $$
for $j \geq 1$, then 
$$e_j \geq d_j \geq 0.$$
We only need to prove that the power series $\sum\limits_{j=0}^{\infty} e_j \delta^j$ converges. Note that  $\{e_j\}_{j=0}^{\infty}$ is an increasing sequence and $e_1 =e^3_0= d_0^3$, then for $j \geq 2$ we have
\begin{eqnarray} 
\label{eq:ej}
e_j &\leq& 3 \sum_{n=0}^{j-2} \sum\limits_{m=0}^n e_{m+1} e_{n-m} e_{j-n-1} \leq  3 \sum_{n=0}^{j-2} \sum\limits_{m=0}^n e_{m+1} e_{n-m + 1} e_{j-n-1} \\
& \leq & 9 \sum_{n=0}^{j-2} \sum\limits_{m=0}^n e_{m+1} e_{n-m + 1} e_{j-n-1}. \nonumber
\end{eqnarray}
We further define a sequence $\{p_j\}_{j=0}^{\infty}$ by $p_0 = 3 e_1$ and 
$$ p_j = \sum_{n=0}^{j-1} \sum_{m=0}^n p_m p_{n-m} p_{j-1-n}, \quad j \geq 1.$$
Using \eqref{eq:ej}, we have   $p_j \geq 3 e_{j+1}$ for  $j \geq 0$. 
It is easy to check that $p_j = p_0^{2j+1} A_j(3,1)$ for $j \geq 0$, where 
$$ A_j(3,1) = \frac{1}{2j+1} \left( \begin{matrix} 3j \\ j \end{matrix} \right) \sim \frac{\sqrt{3}}{4\sqrt{\pi}} j ^{-3/2} \left( \frac{27}{4} \right)^j$$
are the Fuss-Catalan numbers \cite{qi18}. Therefore, the power series $\sum\limits_{j=0}^{\infty} p_j \delta^j$ converges for
$$|\delta| \leq \delta_* = \frac{4}{27 p_0^2} = \frac{1}{3888 C_*^{12}},$$
where $C_*$ is defined in Lemma \ref{lemma:new_series}. This concludes the proof.
\end{proof}

Theorem \ref{theorem:robustness} is a direct consequence of Theorem \ref{theorem:exist_phij} and Lemma \ref{lemma:convergence}. Theorem \ref{theorem:exist_phij} implies that all terms of power series in \eqref{eq:expphi} are solvable, and Lemma \ref{lemma:convergence} shows that the series  converge for sufficiently small $|\delta|$. Therefore, for any sufficiently small $\delta$, the series in \eqref{eq:expphi} give rise to a BIC in the perturbed structure with a dielectric function given by \eqref{eq:pert_eps}.

\section{Conclusion}
In this paper, we  presented a rigorous theory on the robustness of generic BICs in 2D dielectric structures with a single periodic direction. In our earlier work \cite{yuan17ol}, the BICs in perturbed structures were formally constructed as power series of the perturbation amplitude $\delta$ without showing the convergence of the power series. Here, we formulated the  problem in appropriate function spaces, established the solvability conditions and norm estimates for every term in the power series, and proved that the series indeed converge for sufficiently small $|\delta|$. This provides a rigorous mathematical justification for the formal  theory of \cite{yuan17ol}, confirming the robustness of generic BICs.

Our analysis is restricted to 2D structures with a single periodic direction, nondegenerate and generic BICs with a single radiation channel, symmetry-preserving perturbations, and TE polarization. However, the overall framework can be extended to rigorously analyze the robustness of BICs in 3D structures with two periodic directions \cite{yuan21rob}, and in other structures, such as waveguides with lateral leakage channels \cite{nan24opex} and rotational symmetric periodic waveguides. It can also be used to study the parametric dependence theory of BICs with multiple radiation channels or under symmetry-breaking perturbations \cite{lijun20pd1,amgad23pra}. Moreover, the methodology can be extended to  rigorously analyze the bifurcation  of non-generic BICs \cite{nan24ol}.

\end{document}